\newtheorem{theorem}{Theorem}
\newtheorem{definition}{Definition}
\newtheorem{lemma}{Lemma}
\newtheorem{corollary}{Corollary}
\newtheorem{remark}{Remark}
\newtheorem{proposition}{Proposition}
\newtheorem{problem}{Problem}
\renewcommand{\Re}{\mathbb{R}}
\DeclareMathOperator{\conv}{conv}
\DeclareMathOperator{\perim}{perim}
\DeclareMathOperator{\bd}{bd}
\DeclareMathOperator{\Sph}{\mathbb{S}}
\DeclareMathOperator{\inter}{int}
\DeclareMathOperator{\arclength}{arclength}
\DeclareMathOperator{\MinMin}{MinMin}
\DeclareMathOperator{\MinMax}{MinMax}
\DeclareMathOperator{\MaxMin}{MaxMin}
\DeclareMathOperator{\MaxMax}{MaxMax}
\begin{document}

\title[Curves with increasing chords]{Curves with increasing chords in normed planes}

\author[Z. L\'angi]{Zsolt L\'angi}
\author[S. Lengyel]{S\'ara Lengyel}

\address{Zsolt L\'angi, Bolyai Institute, University of Szeged,\\
Aradi V\'ertan\'uk tere 1, 6720 Szeged, Hungary, and\\
HUN-REN Alfr\'ed R\'enyi Institute of Mathematics,\\
Re\'altanoda utca 13-15, H-1053, Budapest, Hungary}
\email{zlangi@server.math.u-szeged.hu}
\address{S\'ara Lengyel, Budapest University of Technology and Economics,\\
M\H uegyetem rkp. 3., H-1111 Budapest, Hungary}
\email{saralengyel03@gmail.com}

\thanks{Partially supported by the ERC Advanced Grant ``ERMiD'' and the National Research, Development and Innovation Office, NKFI, K-147544 grant}

\subjclass[2020]{52A21, 52A38, 52A40}
\keywords{normed space, increasing chord property, involute}

\begin{abstract}
A curve has the increasing chord property if for any points $a,b,c,d$ in this order on the curve, the distance of $a,d$ is not smaller than that of $b,c$. 
Answering a conjecture of Larman and McMullen, Rote proved in 1994 that the arclength of a curve in the Euclidean plane with the increasing chord property is at most $\frac{2\pi}{3}$ times the distance of its endpoints, and this inequality is sharp. In this note we generalize the result of Rote for curves in a normed plane with a strictly convex norm, based on an investigation of the geometric properties of involutes in normed planes. We also discuss some related extremum problems.
\end{abstract}

\maketitle

\section{Introduction}

A curve satisfies the increasing chord property if for any points $a,b,c,d$ in this order on the curve, the distance of $a,d$ is not smaller than that of $b,c$.
In 1971, Binmore \cite{Bin71} asked if there is a universal upper bound $C > 0$ such that for any curve $\Gamma$ in the Euclidean plane satisfying the increasing chord property, the length of $\Gamma$ is at most $C$ times the distance of its endpoints. This question was answered by Larman and McMullen \cite{LM72}, who proved that the statement is true with $C=2\sqrt{3}$, while the best constant $C= \frac{2\pi}{3}$ was determined by Rote \cite{Rot94} in 1994; here equality is attained e.g. if $\Gamma$ is the union of two sides of a Reuleaux triangle. For more information on the early results of this problem, the interested reader is referred to \cite[Problem G3]{CFG91} of the open problem book of Croft, Falconer and Guy.

Concepts related to the increasing chord property are also extensively studied in the literature. In particular, the stretch factor of a curve $\gamma$ is defined as the ratio of its arclength to the distance of its endpoints, and the geometric dilation of $\gamma$ is the maximum stretch factor of all sub-arcs of $\gamma$. Dilation is also defined for a graph embedded in Euclidean space as the maximum stretch factor of its edges. These concepts are studied e.g. in \cite{AKK+08, CDMT21,DEK+07, NS00}. Algorithms to check whether a polygonal arc in $d$-dimensional Euclidean space with $d \leq 3$ satisfies the increasing chord property can be found in \cite{ACG+13}.

Even though the notion of increasing chords can be naturally extended to curves in any metric space, this property as well as all related concepts mentioned in the previous paragraph were studied only in Euclidean space. The aim of this note is to extend this investigation, in particular the question of Binmore, to curves in normed spaces.

Before stating our main result, we recall the well-known fact that the unit ball of a normed space $\mathbb{M}$ is an origin-symmetric convex body $M$, and vice versa, every such convex body $M$ induces a unique normed space $\mathbb{M}$. Based on this, we denote the family of origin-symmetric $d$-dimensional convex bodies by $\mathcal{M}_d$, and for every $M \in \mathcal{M}_d$, the induced normed space by $\mathbb{M}$. For any two points $p,q \in \mathbb{M}$, we denote their distance by $||p-q||_M$.

\begin{theorem}\label{thm:main}
Let $p,q$ be points in a normed plane $\mathbb{M}$ with a strictly convex unit disk $M$, at unit distance apart. Let $f:[0,1] \to \mathbb{M}$ be a continuous curve satisfying the increasing chord property such that $f(0)=p$ and $f(1)=q$. Then the arclength of $f$, in the norm of $\mathcal{M}$, is at most half of the perimeter of $(p+M) \cap (q+M)$.
\end{theorem}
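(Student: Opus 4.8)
The plan is to first confine the curve to a convex lens, to pin down the target value via the central symmetry of that lens, and then to bound the length by a calibration-type argument in which the extremal (boundary) traversal is realised through involutes of the two bounding spheres.

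\emph{Step 1: confinement and monotonicity.} Write $L=(p+M)\cap(q+M)$ and put $\alpha(t)=||f(t)-p||_M$, $\beta(t)=||f(t)-q||_M$. Applying the increasing chord property to the quadruples $(p,p,f(t),q)$ and $(p,f(t),q,q)$ yields $\alpha(t)\le 1$ and $\beta(t)\le 1$, so $f([0,1])\subset L$; as $L$ is convex this already restricts the curve. Applying it to $(p,p,f(t_1),f(t_2))$ and to $(f(t_1),f(t_2),q,q)$ for $t_1\le t_2$ shows that $\alpha$ is nondecreasing from $0$ to $1$ and $\beta$ is nonincreasing from $1$ to $0$. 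Note that $p,q\in\bd L$, and that $\bd L$ splits into an arc $A_p\subset\bd(p+M)$ (through $q$) and an arc $A_q\subset\bd(q+M)$ (through $p$), meeting at the two tips $\{u,w\}=\bd(p+M)\cap\bd(q+M)$.

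\emph{Step 2: the target value.} The central reflection $\sigma\colon x\mapsto p+q-x$ fixes the midpoint $\tfrac12(p+q)$ and, since $M=-M$, maps $p+M$ onto $q+M$; hence $\sigma(A_p)=A_q$, while $\sigma$ interchanges $u$ with $w$ and $p$ with $q$. Consequently $\arclength(A_p)=\arclength(A_q)=\tfrac12\perim L$, and for either tip the boundary path from $p$ to $u$ to $q$ (along $A_q$, then along $A_p$) has length exactly $\tfrac12\perim L$: indeed $\sigma$ identifies the sub-arc of $A_q$ from $p$ to $u$ with the sub-arc of $A_p$ from $q$ to $w$, so the two pieces of the boundary path reassemble into the full arc $A_p$. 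Thus the theorem asserts that $f$ is no longer than this boundary path, the sought extremal curve (the normed analogue of two sides of a Reuleaux triangle).

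\emph{Step 3: calibration via involutes.} Off the line through $p,q$ the map $x\mapsto(\alpha,\beta)$ is, by strict convexity of $M$, a homeomorphism with inverse $X(\alpha,\beta)$, so on each side the curve is the monotone image $t\mapsto X(\alpha(t),\beta(t))$. Writing $f'=X_\alpha\alpha'+X_\beta\beta'$ with $\alpha'\ge0$ and $\beta'\le0$, the triangle inequality gives $||f'||_M\le ||X_\alpha||_M\,\alpha'+||X_\beta||_M\,(-\beta')$, whence
\[
\arclength(f)\ \le\ \int_f ||X_\alpha||_M\,\dint\alpha+\int_f ||X_\beta||_M\,(-\dint\beta).
\]
Here $||X_\alpha||_M$ is the arclength density, per unit $\alpha$, along the sphere $q+\beta M$, and $||X_\beta||_M$ the density, per unit $|\beta|$, along $p+\alpha M$. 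The aim is to show that each integral is bounded by its value over the corresponding boundary arc, i.e.\ that the transverse length can be ``pushed out'' onto $A_q$ and $A_p$; summing then gives $\tfrac12\perim L$ by Step~2. The comparison curves effecting this push-out are exactly the involutes of $\bd(q+M)$ and $\bd(p+M)$ in the norm of $M$, whose geometric properties supply the monotone correspondence between concentric spheres along which the density is transported without increase.

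\emph{Main obstacle.} The crux is precisely this transport inequality, and it is genuinely delicate: the density $||X_\alpha||_M$ \emph{blows up} along the open segment $pq$, so the monotonicity of $\alpha,\beta$ \emph{alone} does not bound the length by $\tfrac12\perim L$. One must use the increasing chord property in full strength — in the differentiable model it says that for all $s<t$ the vector $f(t)-f(s)$ pairs nonnegatively with the norm-tangents at both $f(s)$ and $f(t)$, so that each normal line of $f$ separates its past from its future — in order to forbid the curve from accumulating length near $pq$. Encoding this as the closedness of a calibrating one-form whose level sets are the above involutes, and verifying the transport inequality, is the heart of the matter; the remaining technicalities (crossings of the line $pq$, handled by splitting at the crossing times and invoking monotonicity; possible non-smoothness of $\bd M$, handled by approximation with smooth strictly convex norms together with lower semicontinuity of length; and the reduction of a general continuous $f$ to a rectifiable one) are routine by comparison.
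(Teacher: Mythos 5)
Your Steps 1 and 2 are correct (confinement of $f$ in the lens $L=(p+M)\cap(q+M)$, monotonicity of $\alpha,\beta$, and the identification of $\tfrac12\perim_M(L)$ with the boundary path through a tip), but these are the easy parts. The proof has a genuine gap exactly where you say it does: Step 3 is not an argument but a statement of intent. The entire content of the theorem is the ``transport inequality'' that would push the length densities $||X_\alpha||_M$, $||X_\beta||_M$ out to the boundary arcs, and you neither formulate it precisely nor prove it; you only observe that monotonicity of $\alpha,\beta$ cannot suffice (correct --- the density blows up near the open segment $[p,q]$, where the spheres centered at $p$ and $q$ become tangent) and that the increasing chord property must enter ``in full strength'' via some calibrating one-form whose existence and closedness are never established. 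As written, no inequality between $\arclength_M(f)$ and $\tfrac12\perim_M(L)$ is ever derived. There are also unaddressed technical obstructions even to setting up Step 3: $f$ is only assumed continuous, so $f'$, $\alpha'$, $\beta'$ need not exist, and even after reducing to rectifiable $f$ one needs absolute continuity (a monotone $\alpha$ can have $\int \alpha' < \mathrm{Var}(\alpha)$), plus a treatment of the crossings of the line $pq$ where the chart $(\alpha,\beta)$ degenerates; calling these ``routine'' does not dispose of them, since the singular set is precisely where your density blows up.

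For comparison, the paper avoids any calibration. Following Rote, it first proves (Lemma~\ref{lem:separates}) that each sub-arc $f([t_i,t_{i+1}])$ is separated from the forbidden region by the involutes of $\conv f([0,t_i])$ and $\conv f([t_{i+1},1])$ based at $f(t_i)$, $f(t_{i+1})$; it then replaces each sub-arc by the shortest curve $g_i$ in the admissible region, which (Lemma~\ref{lem:main}) consists of an involute arc, a segment, and an involute arc, is no longer than the original piece, and still has increasing chords. The resulting curve $g$ is a finite union of convex/concave arcs and segments, and the key step is a \emph{convexification}: the pieces of $g$ are rearranged by slope into a convex curve $g'$ of the same length from $f(0)$ to $f(1)$. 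The increasing chord property of involutes (Lemma~\ref{lem:monotonicity}) then forces $g'\subset M\cap(f(1)+M)$, and the conclusion follows from monotonicity of normed perimeter under inclusion of convex bodies. Note that this last tool --- $K\subseteq L$ convex implies $\perim_M(K)\leq\perim_M(L)$ --- is exactly the mechanism that replaces your hoped-for transport inequality, but it only applies after the curve has been made convex; producing that convex comparison curve without losing length is the idea your proposal is missing.
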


We remark that a Reuleaux triangle in a normed plane $\mathbb{M}$ with unit disk $M$ is the intersection of three translates of $ \lambda M$ for some $\lambda > 0$ centered at points at pairwise distances $\lambda$, measured in the norm (see e.g. \cite{C66, MM07, L16}). Thus, Theorem~\ref{thm:main} is a straightforward generalization of the result of Rote \cite{Rot94} for strictly convex norms. The proof of Theorem~\ref{thm:main} follows the proof of Theorem 1 of Rote in \cite{Rot94}, with the necessary modifications, and is based on the properties of involutes in normed planes.

Base on Theorem~\ref{thm:main}, for any points $p,q \in \mathbb{M}$ with $M \in \mathcal{M}_2$ and satisfying $||p-q||_M=1$, let us define $L_M(p,q)$ as half of the perimeter of $(p+M) \cap (q+M)$, measured in the norm of $\mathbb{M}$. Observe that, unlike in the Euclidean case, this quantity depends not only on the norm, but also on the direction of the segment $[p,q]$. This leads us to the following notion.

\begin{definition}\label{defn:extremalvalues}
Set
\begin{eqnarray*}
\MinMin(L) & = & \min \{ \min \{ L_M(p,q) : p,q \in \mathbb{M}, ||q-p||_M=1 \} : M \in \mathcal{M}_2 \},\\
\MinMax(L) & = & \min \{ \max \{ L_M(p,q) : p,q \in \mathbb{M}, ||q-p||_M=1 \} : M \in \mathcal{M}_2 \},\\
\MaxMin(L) & = & \max \{ \min \{ L_M(p,q) : p,q \in \mathbb{M}, ||q-p||_M=1 \} : M \in \mathcal{M}_2 \},\\
\MaxMax(L) & = & \max \{ \max \{ L_M(p,q) : p,q \in \mathbb{M}, ||q-p||_M=1 \} : M \in \mathcal{M}_2 \}.
\end{eqnarray*}
\end{definition}

Our second result is the following.

\begin{theorem}\label{thm:extremals}
We have
\begin{enumerate}
\item[(i)] $\MinMin(L)=2$, with equality e.g. if $M$ is a parallelogram or an affinely regular hexagon;
\item[(ii)] $\MinMax(L)=2$, with equality e.g. if $M$ is an affinely regular hexagon;
\item[(iii)] $\frac{2\pi}{3} \leq \MaxMin(L) \leq \frac{8}{3}$;
\item[(iv)] $\MaxMax(L)=3$, with equality e.g. if $M$ is a parallelogram.
\end{enumerate}
\end{theorem}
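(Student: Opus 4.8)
The plan is to reduce everything to one identity for $L_M(p,q)$ coming from an affine-regular hexagon inscribed in $M$, and then to read off the four extremal values from the classical bound $6 \le \perim(M) \le 8$ on the perimeter of the unit circle in its own norm. Fix $p,q$ with $\|q-p\|_M=1$, set $v=q-p$, and translate by $-p$ to work with the lens $\Lambda=M\cap(v+M)$, so that $2L_M(p,q)=\perim(\Lambda)$. Assume first that $M$ is strictly convex; then $\partial M$ and $\partial(v+M)$ meet in exactly two points $w_-,w_+$, characterized by $\|w_\pm\|_M=\|w_\pm-v\|_M=1$. The point reflection $\sigma(x)=v-x$ interchanges $M$ with $v+M$, hence interchanges the two circles and the two intersection points; since $\sigma(w_+)=v-w_+$, this yields the crucial relation $w_++w_-=v$. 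Therefore the six points $\pm v,\pm w_+,\pm w_-$ lie on $\partial M$ and, as $\|v\|_M=\|w_+\|_M=\|w_+-v\|_M=1$, they are the vertices of an affine-regular hexagon inscribed in $M$. By the same symmetry $\sigma$, $\partial\Lambda$ splits into two arcs of equal length, one on each circle, so $L_M(p,q)$ equals the length of the arc $A\subset\partial M$ from $w_-$ through $v$ to $w_+$.

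First I would cut $\partial M$ at the six vertices, in cyclic order $v,w_+,-w_-,-v,-w_+,w_-$, into arcs of $M$-lengths $s_0,\dots,s_5$. Central symmetry $x\mapsto-x$ gives $s_0=s_3$, $s_1=s_4$, $s_2=s_5$, so $\perim(M)=2(s_0+s_1+s_2)$, while $A$ is the union of the arcs over $[w_-,v]$ and $[v,w_+]$, whence
\[
L_M(p,q)=s_5+s_0=s_2+s_0=\tfrac12\perim(M)-s_1 .
\]
Every hexagon edge has $M$-length $1$, and an arc is at least as long as its chord, so $s_0+s_2\ge 2$ and $s_1\ge\|w_++w_-\|_M=\|v\|_M=1$; together with $\perim(M)\le 8$ this gives the sharp two-sided bound $2\le L_M(p,q)\le \tfrac12\perim(M)-1\le 3$. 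Thus $\MinMin(L)\ge2$, $\MinMax(L)\ge2$ and $\MaxMax(L)\le3$. For a parallelogram $M$ the lens in a diagonal direction is a homothet of $M$ of half-perimeter $2$ and in an edge direction a rectangle of half-perimeter $3$, giving $\MinMin(L)=2$ and $\MaxMax(L)=3$. For an affine-regular hexagon $\perim(M)=6$, so the identity forces $L=3-s_1\le2$; combined with $L\ge2$ this gives $L\equiv 2$ in \emph{every} direction, so $\max_v L_M=2$ and $\MinMax(L)=2$ (and the hexagon also serves as an equality example in (i)).

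For part (iii), the lower bound is witnessed by the Euclidean disk, whose every lens is a vesica of half-perimeter $\tfrac{2\pi}{3}$, so $\min_v L=\tfrac{2\pi}{3}$ and $\MaxMin(L)\ge\tfrac{2\pi}{3}$. For the upper bound I would use that the three directions $v,-w_+,-w_-$ are alternate vertices of the \emph{same} inscribed hexagon, so the three arcs $A(v),A(-w_+),A(-w_-)$ partition $\partial M$ exactly once; since $L(-w_\pm)=L(w_\pm)$ by central symmetry, this yields the identity
\[
L_M(v)+L_M(w_+)+L_M(w_-)=\perim(M)\le 8 .
\]
Hence for every $M$ the smallest of these three values is at most $\tfrac13\perim(M)\le\tfrac83$, so $\min_v L_M(v)\le\tfrac83$ and $\MaxMin(L)\le\tfrac83$.

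The core inequalities above are clean; the main obstacle is the passage from strictly convex $M$ to arbitrary $M\in\mathcal M_2$. When $M$ is not strictly convex the two circles may share boundary segments, the six base points may collide, and the neat splitting of $\partial\Lambda$ into two $M$- and $(v+M)$-arcs can fail, so the arc-length identity $L=\tfrac12\perim(M)-s_1$ must be justified carefully. I would handle this by approximating $M$ from inside by strictly convex bodies $M_n\to M$ and invoking continuity of the half-perimeter of $M_n\cap(v+M_n)$ under simultaneous Hausdorff convergence of both the lens and the norm; the two-sided bound then survives in the limit, while the exact extremal \emph{values} (and the hexagon computation $L\equiv2$) are in addition pinned down by the direct computations above. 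The only external ingredient is the classical fact $\perim(M)\le 8$ for the unit circle of a normed plane, which I would cite rather than reprove.
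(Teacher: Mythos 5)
Your core computations are correct, and for parts (ii)--(iii) your route is essentially the paper's: the paper also reduces everything to an affinely regular hexagon inscribed in $M$ (citing Chakerian's correspondence between Reuleaux triangles and inscribed affinely regular hexagons, which you re-derive in a self-contained way via the point reflection $\sigma(x)=v-x$), obtains the same identity $L_M(p,q)=\tfrac12\perim_M(M)-\arclength_M(\Gamma_i^H)$, and then combines Go{\l}ab's bound $\perim_M(M)\le 8$ with the Euclidean disk to get (iii). Where you genuinely diverge is in (i) and (iv). The paper proves $L_M(p,q)\ge 2$ for \emph{arbitrary} $M$ by inscribing in the lens the quadrilateral $\conv\{x,p,q,x'\}$, where $x$ is a common boundary point of $p+M$ and $q+M$ and $x'$ is its reflection in the midpoint of $[p,q]$, so that all four sides have unit length, and then invoking monotonicity of the normed perimeter under inclusion of convex disks; and it proves $\perim_M(X)\le 6$ in (iv) by circumscribing about $X$ a parallelogram with two sides through $p$ and $q$ and showing its corners lie in $p+M$ --- an argument needing neither Go{\l}ab's theorem nor strict convexity. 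Your unified derivation, $2\le L_M(p,q)=\tfrac12\perim_M(M)-s_1\le 3$ from arc-versus-chord estimates plus $\perim_M(M)\le 8$, is more economical, but it is established only for strictly convex $M$, while the equality cases you need --- the parallelogram and the hexagon --- are precisely non-strictly-convex bodies; thus the lower bounds in (i)--(ii), the upper bound in (iv), and the computation $L\equiv 2$ for the hexagon all hinge on your final approximation step. That step does go through (one needs joint continuity of $(K,M)\mapsto\perim_M(K)$ and of $(M,v)\mapsto M\cap(v+M)$ in the Hausdorff metric, continuity of the self-perimeter $M\mapsto\perim_M(M)$ so that $\tfrac12\perim_{M_n}(M_n)-1\to 2$ forces $L\equiv2$ for the hexagon, and a compactness argument selecting convergent near-minimizing directions for the upper bound in (iii)), but it is the one place in your write-up that is only sketched and it carries real weight; the paper's direct quadrilateral arguments for (i) and (iv), and its explicit six-equal-arcs computation for the regular hexagon in (ii), show how to bypass it entirely.
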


Throughout the paper, for points $p,q$ in a normed plane $\mathbb{B}$, we denote by $[p,q]$ the closed segment with endpoints $p,q$. If $X \subset \mathbb{M}$, its convex hull is denoted by $\conv(X)$. For a continuous curve $\Gamma : [0,1] \to \mathbb{M}$, we define the arclength $\arclength_M(\Gamma)$ of $\Gamma$ in the natural way, namely as the supremum of the total normed length $\sum_{i=1}^n ||\Gamma(t_i)-\Gamma(t_{i-1})||_M$ of any polygonal curve induced by a subdivision $0=t_0 < t_1 < t_2 < \ldots < t_n=1$ of $[0,1]$. The arclength of a convex disk $K$ (i.e. a compact, planar, convex set with nonempty interior) in $\mathbb{M}$ is called the \emph{perimeter} of $K$, denoted as $\perim_M(K)$.
For simplicity, as it is often done, we also equip $\mathbb{M}$ with an underlying Euclidean plane, induced by a Descartes coordinate system.

The structure of the paper is as follows. In Section~\ref{sec:involutes}, as preliminary material to the proofs of Theorems~\ref{thm:main} and \ref{thm:extremals}, we introduce a geometric definition of the involutes of a plane convex body in a normed plane $\mathbb{M}$, and investigate their properties. We remark that involutes in normed planes were recently investigated in \cite{BMS19} from a differential geometry point of view, nevertheless our approach is more general as it does not make conditions on the differentiability of the curve.
Then, in Section~\ref{sec:main} we prove Theorem~\ref{thm:main}, and in Section~\ref{sec:extremals} we prove Theorem~\ref{thm:extremals}. Finally, in Section~\ref{sec:remarks} we collect some additional remarks and questions.

\section{Involutes in normed planes}\label{sec:involutes}

Involutes of convex disks are well-known curves in the theory of the differential geometry of curves in the Euclidean plane (see e.g. \cite{R00}). These concepts were generalized for normed planes for smooth curves e.g. in \cite{ABS00, BMS19}, based on the notion of curvature in normed planes. In addition, they were investigated for polygonal norms and for constant width bodies in this norm in \cite{CM16}. The goal of this section is to introduce a geometric definition that can be applied for any norm and convex disk, and investigate the properties of these curves. Our definition, applied for smooth curves, coincides with the one in the literature (see e.g. \cite{ABS00}).

\begin{definition}\label{defn:involute}
Let $C$ be a convex disk in the normed plane $\mathbb{M}$, and let $p \in \bd(C)$. We orient $\bd(C)$ in counterclockwise direction, and orient all tangent (i.e. supporting) lines of $C$ accordingly. We assume that the unit vector $(1,0)=(\cos 0, \sin 0)$ is an oriented unit tangent vector of a tangent line of $C$ at $p$. For any $\theta \in [0,2\pi)$, we denote the oriented tangent line with tangent vector $(\cos \theta, \sin \theta)$ by $L_{\theta}$, and for any $q \in \bd(C)$ we set $d(p,q)$ as the $M$-length of the arc of $\bd(C)$ from $p$ to $q$ according to this orientation. Then we define $\Gamma_p$ as follows:
$\Gamma_p(\theta)$ is the point of $L_{\theta}$ whose signed $M$-distance from $q$ is equal to $-d(p,q)$ (see Figure~\ref{fig:involute}).
\end{definition}

\begin{figure}[ht]
\begin{center}
\includegraphics[width=\textwidth]{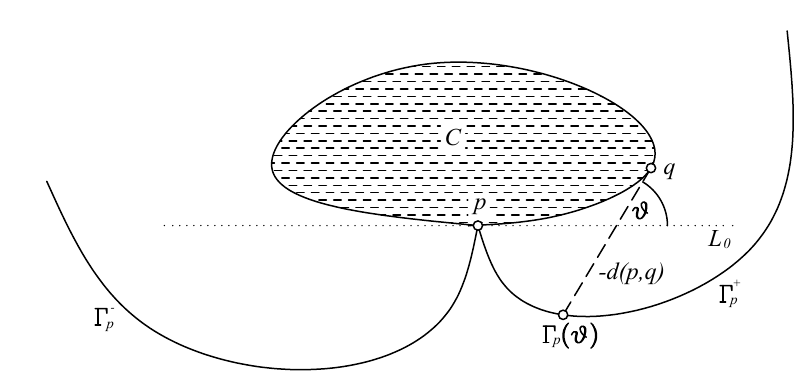}
\caption{An illustration for the definition of an involute of a convex disk.}
  \label{fig:involute}
\end{center}
\end{figure}

\begin{remark}
The definition of $\Gamma_p(\theta)$ is independent of $q$, i.e. if $L_{\theta}$ is a segment $S$, then for any $q \in S$ the point $\Gamma_p(\theta)$ is the same point of $L_{\theta}$.
\end{remark}

\begin{remark}
The curve $\Gamma_p$ of Definition~\ref{defn:involute}, defined for $\theta \in [0,2\pi)$ can be naturally extended to the interval $(-\infty,\infty)$. This curve is the union of two `spirals', each starting at $p$. We call the component defined on $[0,\infty)$ the \emph{positive branch} of $\Gamma_p$, denoted as $\Gamma_p^+$, and the one defined on $(-\infty,0]$ the \emph{negative branch} of $\Gamma_p$, denoted as $\Gamma_p^-$.
\end{remark}

Recall that a vector $v$ is \emph{Birkhoff-James orthogonal} (or simply \emph{Birkhoff orthogonal}) to $u$ if $||v||_M \leq ||v+t u||_M$ for all $t \in \Re$ \cite{AMW22}. In this paper, if $L$ is a line with tangent vector $v$, and $L'$ is a line with tangent vector $u$, we say that $L$ is Birkhoff orthogonal to $L'$ if $v$ is Birkhoff orthogonal to $u$.

\begin{lemma}\label{lem:convexity}
The curve $\Gamma_p$ satisfies the following properties:
\begin{enumerate}
\item[(i)] If $\theta \neq \theta'$ are positive, then $\Gamma_p(\theta) \neq \Gamma_p(\theta')$;
\item[(ii)] For any $\theta \geq 0$, the curve is convex on the interval $[\theta,\theta+\pi]$.
\item[(iii)] For any line $L$ passing through $\Gamma_p(\theta')$ with $\theta < \theta' < \theta + \pi$, $L$ supports $\Gamma_p$ at $\Gamma_p(\theta')$ if and only if $L_{\theta'}$ is Birkhoff orthogonal to $L$.
\end{enumerate}
\end{lemma}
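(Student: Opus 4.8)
The plan is to reduce the whole lemma to the explicit description
$\Gamma_p(\theta) = q(\theta) - \ell(\theta)\,T_\theta$, where $q(\theta) \in \bd(C)$ is a point at which $L_\theta$ supports $C$, where $\ell(\theta) = d(p, q(\theta))$ is the nondecreasing $M$-arclength travelled from $p$, and where $T_\theta = (\cos\theta,\sin\theta)/\|(\cos\theta,\sin\theta)\|_M$ is the $M$-unit tangent vector; the preceding Remark guarantees that this is well defined even when $L_\theta$ meets $\bd(C)$ in a segment. The engine of all three parts is a single computation of the one-sided tangent directions of $\Gamma_p$, carried out directly from the difference quotient rather than by differentiation, so that it applies to an arbitrary convex disk and an arbitrary, possibly nonsmooth, norm.

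\emph{Core step.} Fix $\theta'$ and let $\epsilon \to 0^+$. Writing $\Delta q = q(\theta'+\epsilon)-q(\theta')$, $\Delta\ell = \ell(\theta'+\epsilon)-\ell(\theta')$ and $\Delta T = T_{\theta'+\epsilon}-T_{\theta'}$, one gets
\[
\Gamma_p(\theta'+\epsilon)-\Gamma_p(\theta') = \bigl(\Delta q - \Delta\ell\,T_{\theta'}\bigr) - \ell(\theta'+\epsilon)\,\Delta T .
\]
The first bracket is negligible compared with the second: at a corner of $C$ one has $\Delta q = \Delta\ell = 0$ so the identity is exact, while at a smooth boundary point the chord $\Delta q$ has direction tending to $T_{\theta'}$ and $\|\Delta q\|_M$ agrees with $\Delta\ell$ to first order, making the bracket of order $o(\Delta\ell)$ against $\|\Delta T\|_M\sim \Delta\ell$. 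Hence the direction of $\Gamma_p(\theta'+\epsilon)-\Gamma_p(\theta')$ tends to that of $-\Delta T$, i.e. to the one-sided tangent direction of the unit circle $\bd(M)$ at the point $T_{\theta'}$. Since a vector $u$ is a supporting (tangent) direction of $M$ at $T_{\theta'}\in\bd(M)$ exactly when $T_{\theta'}$ is Birkhoff orthogonal to $u$, I conclude that the left and right tangent directions of $\Gamma_p$ at $\Gamma_p(\theta')$ are precisely the one-sided Birkhoff-orthogonal partners of $(\cos\theta',\sin\theta')$, that is, of $L_{\theta'}$.

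From here the three parts follow in order. For (ii), as $\theta'$ runs over a window of length $\pi$ the direction $T_{\theta'}$ turns monotonically through Euclidean angle exactly $\pi$, and the direction-to-Birkhoff-orthogonal-direction map is monotone and satisfies $R(\phi+\pi)=R(\phi)+\pi$, because $v$ being Birkhoff orthogonal to $u$ is unchanged under $u\mapsto -u$; so by the Core step the tangent direction of $\Gamma_p$ turns monotonically through total angle $\pi$ on $[\theta,\theta+\pi]$, which is exactly the condition for the arc to be convex. Part (iii) is then immediate: on $(\theta,\theta+\pi)$ the arc is convex, so the lines that support it at $\Gamma_p(\theta')$ are precisely those whose direction lies in the closed cone between the left and right tangents computed above, and that cone coincides with $\{u : L_{\theta'} \text{ is Birkhoff orthogonal to } u\}$. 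For (i), local injectivity on each $\pi$-window follows from (ii), since a nonconstant convex arc of total turning $<2\pi$ cannot return to an earlier point; global injectivity on $[0,\infty)$ then follows from the recession of the spiral, namely that on each fixed tangent line $L_\theta$ the involute meets it in the strictly receding points $q(\theta)-\bigl(\ell(\theta)+k\,\perim_M(C)\bigr)T_\theta$ for $k=0,1,2,\dots$, which together with the monotone turning forces successive loops to be nested and the whole positive branch to be simple.

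The main obstacle is the Core step done honestly for a \emph{general} convex disk and a \emph{general} norm, i.e. controlling the error term $\Delta q-\Delta\ell\,T_{\theta'}$ while $\bd(C)$ may carry corners and segments and while $\bd(M)$ may fail to be smooth or strictly convex; this is exactly where the geometric, rather than differential-geometric, definition earns its keep, since it treats corners of $C$ (which produce genuine arcs of $\ell\cdot\bd(M)$ in $\Gamma_p$) and corners of $M$ (which produce corners of $\Gamma_p$) on the same footing. The second delicate point is upgrading the local injectivity in (i) to the global statement, for which the recession estimate above is the essential extra ingredient beyond convexity.
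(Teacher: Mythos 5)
Your proposal takes a genuinely different route from the paper --- you try to compute the one-sided tangent directions of $\Gamma_p$ from difference quotients and then integrate this local information up to convexity --- but as written it has two genuine gaps, both located exactly at the load-bearing steps. First, in the Core step the negligibility of the bracket $\Delta q-\Delta\ell\,T_{\theta'}$ is justified by the claim $\|\Delta T\|_M\sim\Delta\ell$, and this claim is false: $\Delta\ell$ is an arclength along $\bd(C)$ while $\|\Delta T\|_M$ measures the rotation of the unit tangent along $\bd(M)$, and the two are not comparable (at a corner of $C$ one has $\Delta\ell=0$ but $\Delta T\neq 0$, which you note; but conversely, where $\bd(C)$ is nearly flat, or when $C$ is large, one has $\Delta\ell\gg\|\Delta T\|_M$). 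The conclusion you want --- that the bracket is $o(\ell\,\|\Delta T\|_M)$ --- can be salvaged when $\ell(\theta')>0$, but by a different argument: the chord $\Delta q$ has direction lying between the supporting directions $T_{\theta'}$ and $T_{\theta'+\epsilon}$, and one needs a chord--arc comparison $\Delta\ell\leq(1+o(1))\|\Delta q\|_M$ valid in an arbitrary normed plane; neither ingredient is in your text, your corner/smooth dichotomy is not exhaustive (endpoints of flat edges, non-differentiable but non-corner points), and at $\theta'=0$ the main term $\ell\,\Delta T$ vanishes altogether.

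Second, and more seriously, the inference ``the one-sided tangent directions exist and turn monotonically, hence the arc is convex'' is precisely the nontrivial local-to-global step, and you assert it in one sentence. In the stated generality there is no smoothness of $\Gamma_p$ to appeal to, and for a nonsmooth or non-strictly convex $M$ the ``direction-to-Birkhoff-orthogonal-direction map'' is set-valued, so even the monotonicity statement needs to be reformulated before it can be used; similarly, in (i) the claim that monotone turning plus the recession of the spiral ``forces successive loops to be nested'' is left unproven. The paper's proof is designed to avoid exactly these issues: for each $\theta$ it exhibits directly, by a pure triangle-inequality argument, a line $L$ through $\Gamma_p(\theta)$ Birkhoff orthogonal to $L_\theta$ that supports the entire arc $\Gamma_p([\max\{0,\theta-\pi\},\theta+\pi])$ (for $t<\theta$ via $\Gamma_p(t)\in c+d(p,c)M\subset H$, for $t>\theta$ via a chord comparison along $\bd(C)$), which proves (ii) and the ``if'' part of (iii) simultaneously with no limits or differentiation, while (i) follows at once from the fact that the half-lines of $L_\theta\setminus C$ are pairwise disjoint unless $\theta-\theta'\in 2\pi\mathbb{Z}$. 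So your architecture could probably be completed, but the two steps on which everything rests are currently asserted rather than proved.
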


\begin{proof}
For any oriented tangent line $L_{\theta}$, let $L_{\theta}^+$ denote the `positive' half of the line; that is, the open half line obtained as the component of $L_{\theta} \setminus C$ whose points are at positives distance from any point of $L_{\theta} \cap C$ according to our orientation. Note that for any $\theta, \theta' \geq 0$, either $L_{\theta}^+ \cap L_{\theta'}^+ = \emptyset$ or $L_{\theta}^+ = L_{\theta'}^+$, and the latter case holds if and only if $\theta - \theta'$ is an integer multiple of $2\pi$. In both cases, if $\theta \neq \theta'$, then $\Gamma_p(\theta) \neq \Gamma_p(\theta')$ clearly holds. 

Let $\alpha = \max \{ 0, \theta-\pi \}$, and let $L$ be Birkhoff orthogonal to $L_{\theta}$ such that $\Gamma_p(\theta) \in L$. We show that $L$ supports the curve $\Gamma_p(t), t \in [\alpha, \theta+\pi]$, which shows both (ii), and the `if' part of (iii).
To do it, let $c$ be a point of $L_{\theta} \cap C$, and let $H$ denote the closed half plane bounded by $L$ and containing $c$.

Consider some $\alpha \leq t < \theta$. By the triangle inequality and the definition of $\Gamma_p$, the normed distance of $\Gamma_p(t)$ and $c$ is at most $d(p,c)$. Thus, by the triangle inequality, $\Gamma_p(t) \in c + d(p,c) M$. On the other hand, since $L$ is Birkhoff orthogonal to $L_{\theta}$, $L$ is a supporting line of $c + d(p,c) M$, implying that $c + d(p,c) M \subset H$. This shows that $\Gamma_p(t) \in H$ (see Figure~\ref{fig:convex_A}).

\begin{figure}[ht]
\begin{center}
\includegraphics[width=0.8\textwidth]{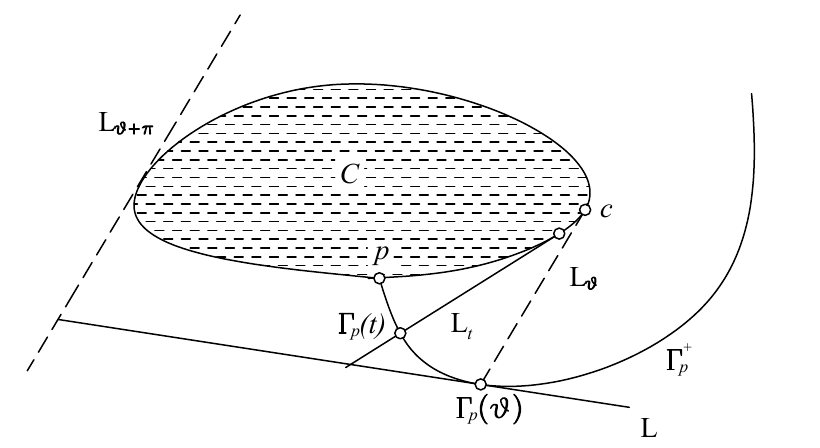}
\caption{An illustration for Lemma~\ref{lem:convexity} in the case $\alpha \leq t < \theta$.}
  \label{fig:convex_A}
\end{center}
\end{figure}

Consider some $\theta < t \leq \theta + \pi$.
Let $d$ be a point of $\Gamma_p(t) \cap C$. Let $b=\Gamma_p(\theta)$. Let $a$ be the intersection of $L$ and the half line $L_{t}^-$, if it exists, and let $e$ denote the intersection of $L_{\theta}$ and $L_{t}^-$, if it exists. Observe that if $a$ does not exist, then $L_t^- \subset H$, implying that $\Gamma_p(t) \in H$. Thus, we may assume that $a$ exists, which yields that also $e$ exists (see Figure~\ref{fig:convex_B}). Let $G(c,d)$ denote the arc of $\bd(C)$ from $c$ to $d$ according to the orientation of $\bd(C)$.
Now, since $L$ is Birkhoff orthogonal to $L_\theta$, $||e-b||_M \leq ||e-a||_M$. Hence, $||d-a||_M \geq ||e-b||_M + ||d-e||_M = ||c-b||_M + ||e-c||_M + ||d-e||_M$. By convexity, $G(c,d) \leq ||e-c||_M + ||d-e||_M$. Combining these inequalities we obtain that
\[
||d-a||_M \geq ||c-b||_M + G(c,d) = ||\Gamma_p(t)-d||_M.
\]
This implies that $\Gamma_p(t) \in [d,a] \subset H$.

\begin{figure}[ht]
\begin{center}
\includegraphics[width=0.8\textwidth]{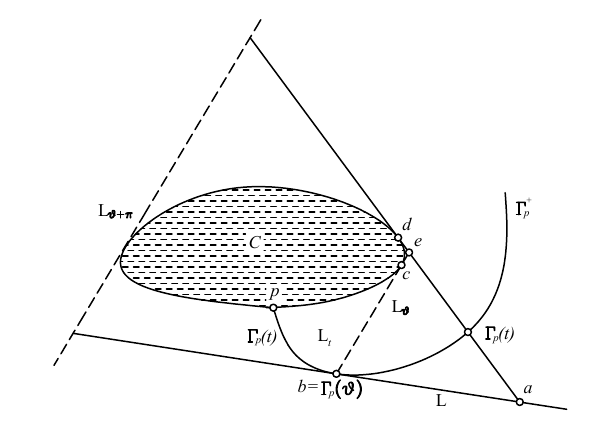}
\caption{An illustration for Lemma~\ref{lem:convexity} in the case $\theta < t < \theta + \pi$.}
  \label{fig:convex_B}
\end{center}
\end{figure}

Finally, let $L$ be a supporting line of $\Gamma_p$ at $\Gamma_p(\theta)$. For contradiction, suppose that $L_{\theta}$ is not Birkhoff orthogonal to $L$. Then there is some $\theta' \in (\alpha, \theta+\pi)$ such that $L_{\theta'}$ is Birkhoff orthogonal to a line parallel to $L$. On the other hand, by the convexity on $\Gamma_p$, this line must be $L$, and the segment connecting $\Gamma_p(\theta)$ and $\Gamma_p(\theta')$ must belong to $\Gamma_p$. Thus, for any relative interior point $\Gamma_p(t)$ of this segment, $L$ is the only supporting line of $\Gamma_p$ at this point, implying that $L_t$ is Birkhoff orthogonal to $L$. By compactness, from this it follows that $L_{\theta}$ is Birkhoff orthogonal to $L$, contradicting our assumption.
\end{proof}

\begin{remark}\label{rem:strictconvexity}
We note that if the norm is strictly convex, $\Gamma_p$ is strictly convex for any $\theta \geq 0$ on the interval $[\theta,\theta+\pi]$. Indeed, to prove it it is sufficient to observe that the strict convexity of the norm yields that for any supporting line $L$ of $\Gamma_p$ at some $\Gamma_p(\theta')$, there is a unique direction that is Birkhoff orthogonal to $L$ and thus, supporting lines at different points of $\Gamma_p$ have different directions.
\end{remark}

In the remaining part of the paper, following \cite{Rot94}, if, for a curve $\Gamma : [a,b] \to \mathbb{M}$ and point $p \in \mathbb{M}$, $t_1 \leq t_2$ implies $||\Gamma(t_1)-p||_M \leq ||\Gamma(t_2)-p||_M$, we say that \emph{$\Gamma$ satisfies the increasing chord property with respect to $p$}. Furthermore, if $\Gamma$ satisfies the increasing chord property with respect to any $p \in S$ for some nonempty set $S \subseteq \mathbb{M}$, we say that \emph{$\Gamma$ satisfies the increasing chord property with respect to $S$}.

\begin{lemma}\label{lem:monotonicity}
For any $\theta \geq 0$, the involute $\Gamma_p$ satisfies the increasing chord property on the interval $[\theta,\theta+\pi]$. Furthermore, $\Gamma_p([0,\pi])$ satisfies the increasing chord property with respect to $C$.
\end{lemma}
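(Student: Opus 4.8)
The plan is to reduce both assertions to the monotonicity of a distance function, and then to reuse the geometric mechanism of Lemma~\ref{lem:convexity}. I would first record the elementary equivalence that a curve has the increasing chord property exactly when, for each of its points $X$, the distance to the moving point is nondecreasing as that point recedes from $X$ along the curve (one implication sets two of the four points equal; the converse chains two such monotonicities). Under this reduction the first assertion asks that $r\mapsto\lVert\Gamma_p(r)-\Gamma_p(s)\rVert_M$ be nonincreasing on $[\theta,s]$ and nondecreasing on $[s,\theta+\pi]$ for every $s\in[\theta,\theta+\pi]$, while the second, by the definition of the increasing chord property with respect to a set, is the family of statements that $\theta\mapsto\lVert\Gamma_p(\theta)-P\rVert_M$ is nondecreasing on $[0,\pi]$ for each $P\in C$.

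The common engine I would use is the following: for an earlier point $X$, a later point $Y$ and a center $P$, one has $\lVert Y-P\rVert_M\ge\lVert X-P\rVert_M$ as soon as some norming functional $\Phi$ of $X-P$ (so $\Phi(X-P)=\lVert X-P\rVert_M$ and $\lVert\Phi\rVert_*=1$) satisfies $\Phi(Y-X)\ge0$, since then $\lVert Y-P\rVert_M\ge\Phi(Y-P)=\Phi(X-P)+\Phi(Y-X)\ge\lVert X-P\rVert_M$. Geometrically $\Phi$ is the outer normal of a line supporting the disk $P+\lVert X-P\rVert_M\,M$ at $X$, and the hypothesis is that $Y$ lie in its closed outer half-plane; so in each case it suffices to exhibit, at the earlier point, a supporting line of the relevant iso-distance disk leaving the later point on the far side. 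The tools for producing such a line are precisely those of Lemma~\ref{lem:convexity}: convexity of $\Gamma_p$ on intervals of length $\pi$, the identification of its supporting lines as the lines Birkhoff orthogonal to the $L_\theta$, and the identity $\lVert c_\theta-\Gamma_p(\theta)\rVert_M=d(p,c_\theta)$ (so chord $\le$ arc). As a warm-up these already give, for $\theta_1\le\theta_2$ and $c_2\in L_{\theta_2}\cap C$, the contact-point monotonicity $\lVert\Gamma_p(\theta_1)-c_2\rVert_M\le d(p,c_2)=\lVert\Gamma_p(\theta_2)-c_2\rVert_M$, which is exactly the first case treated in the proof of Lemma~\ref{lem:convexity}.

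For the first assertion I would isolate the auxiliary fact that a convex arc whose tangent direction turns by at most $\pi$ has nondecreasing distance from each of its endpoints; applied to $\Gamma_p$ on $[s,\theta+\pi]$ and on $[\theta,s]$, whose tangent turnings are $\theta+\pi-s\le\pi$ and $s-\theta\le\pi$, this yields the two required monotonicities. To prove the auxiliary fact via the engine above, given an earlier point $\Gamma_p(r_1)$ and a later point $\Gamma_p(r_2)$ on the arc issuing from $\Gamma_p(s)$, I would pick, using that supporting directions rotate monotonically with $\theta$ (cf.\ Remark~\ref{rem:strictconvexity}), an intermediate parameter whose supporting direction is parallel to the chord $\Gamma_p(r_1)-\Gamma_p(s)$, obtain from Birkhoff orthogonality a supporting line of the iso-distance disk at $\Gamma_p(r_1)$, and use convexity together with the turning bound $\pi$ to place $\Gamma_p(r_2)$ on its outer side, closing the engine.

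The hard part will be the second assertion for points $P\in C$ lying on the far side of $C$ from $\Gamma_p(\theta)$ (in particular deep interior points): there neither a single supporting line nor the triangle-plus-chord$\le$arc estimate suffices, the crude bound $d(p,c_2)-\lVert c_2-P\rVert_M$ dominating $\lVert\Gamma_p(\theta_1)-P\rVert_M$ only when $P$ lies near the already unwound arc. I expect to handle this by proving the one-sided estimate $\liminf_{\epsilon\to0^+}\bigl(\lVert\Gamma_p(\theta+\epsilon)-P\rVert_M-\lVert\Gamma_p(\theta)-P\rVert_M\bigr)/\epsilon\ge0$ at every $\theta$, which forces monotonicity of the continuous, rectifiable function $\theta\mapsto\lVert\Gamma_p(\theta)-P\rVert_M$. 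The geometric input is that the forward direction of $\Gamma_p$ at $\Gamma_p(\theta)$ is Birkhoff orthogonal to $L_\theta$ and points out of $C$, while $\Gamma_p(\theta)-P$ is nonnegative under the supporting functional of $C$ at $c_\theta$ because $P\in C$; the crux is to upgrade this ``common half-plane'' information to the assertion that the forward tangent is not a strict descent direction of $\lVert\cdot-P\rVert_M$, equivalently that some norming functional of $\Gamma_p(\theta)-P$ is nonnegative on it. Proving this last implication is, in my view, the main obstacle, since the $C$-normal functional that is visibly nonnegative on the forward tangent need not norm $\Gamma_p(\theta)-P$; I would resolve it by a taut-string comparison that replaces an arbitrary $P\in C$ by the boundary point where the segment $[P,\Gamma_p(\theta)]$ meets $\bd(C)$, thereby reducing to the contact-point case already controlled above.
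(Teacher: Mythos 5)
Your reduction of both assertions to pointwise monotonicity of distance functions is fine, and your ``engine'' inequality is correct as stated, but the way you deploy it contains genuine gaps. First, the auxiliary fact you isolate for the first assertion --- that a convex arc whose tangent turns by at most $\pi$ has nondecreasing distance from each endpoint --- is false for general convex arcs: take a hairpin consisting of the unit segment from $(0,0)$ to $(1,0)$, a semicircular turn of radius $\varepsilon$ (total turning exactly $\pi$) ending at $(1,2\varepsilon)$, and the long return segment to $(-10,2\varepsilon)$; the distance from the endpoint $(0,0)$ rises to about $1$ and then collapses to $2\varepsilon$ as the return leg passes over the origin. What makes involutes special is not convexity plus a turning bound but the separation property the paper extracts from Definition~\ref{defn:involute}: the normal line $L_\tau$ (the tangent of $C$ through $\Gamma_p(\tau)$) separates the backward arc $\Gamma_p([\theta,\tau])$ from the forward arc $\Gamma_p([\tau,\theta+\pi])$ --- a property your hairpin lacks. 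Second, even granting the involute structure, your engine cannot be closed at finite parameter separation, only infinitesimally. For the Euclidean circle involute $\gamma(t)=(\cos t+t\sin t,\;\sin t-t\cos t)$ with $P=\gamma(0)$, $X=\gamma(0.1)$, $Y=\gamma(\pi)$, one computes $\langle Y-X,\,X-P\rangle\approx -0.009<0$, so the (here unique) norming functional of $X-P$ is strictly negative on $Y-X$: the later point lies on the \emph{inner} side of the supporting line of the iso-distance disk at $X$, even though $\lVert Y-P\rVert>\lVert X-P\rVert$. Thus the step ``place $\Gamma_p(r_2)$ on its outer side'' is unavailable. There is also an orthogonality mix-up: your intermediate-parameter construction produces, via Lemma~\ref{lem:convexity}~(iii), a line whose direction is Birkhoff orthogonal \emph{to} the chord, whereas a supporting line of the iso-distance disk requires the chord to be Birkhoff orthogonal to the line; Birkhoff orthogonality is not symmetric, so the former does not yield the latter.

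The repair is to run the engine only at infinitesimal scale and integrate, which is essentially what the paper does (after reducing to smooth, strictly convex data by approximation). At each $\tau$, let $v$ be the forward (one-sided) tangent of the convex curve $\Gamma_p$ and $u$ the direction of $L_\tau$; Lemma~\ref{lem:convexity}~(iii) gives exactly the usable orthogonality, namely $u$ Birkhoff orthogonal to $v$. Any $P$ in the closed backward half-plane of $L_\tau$ can be written as $\Gamma_p(\tau)-\alpha u-\beta v$ with $\beta\geq 0$, and the one-variable convex function $\mu\mapsto\lVert\alpha u+\mu v\rVert_M$, minimized at $\mu=0$, shows the forward Dini derivative of $\lVert\Gamma_p(\cdot)-P\rVert_M$ is nonnegative; monotonicity follows by integration. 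This single computation covers $P=\Gamma_p(s)$ (first assertion, using the separation of backward and forward arcs by $L_\tau$) and every $P\in C$, including interior points (second assertion, since $L_\tau$ supports $C$). In particular, the ``main obstacle'' you identify in the second assertion dissolves: you never need a norming functional of $\Gamma_p(\theta)-P$ related to the normal of $C$. Your proposed taut-string substitution would not close the gap in any case: if $Q$ is the point where $[P,\Gamma_p(\theta)]$ meets $\bd(C)$, the identity $\lVert\Gamma_p(\theta)-P\rVert_M=\lVert\Gamma_p(\theta)-Q\rVert_M+\lVert Q-P\rVert_M$ holds only at the time $\theta$, for later times the triangle inequality runs in the wrong direction, and $Q$ is generally not the contact point of any tangent line, so the ``contact-point case'' you controlled earlier does not apply to it.
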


\begin{proof}
 Note that if $C$ is fixed, for any $\theta\in \Re$, $\Gamma_p(\theta)$ is a continuous function of the norm $\mathcal{M}$. Thus, as (the pointwise) limit of a sequence of curves satisfying the increasing chord property also satisfies this property, we may assume that $\mathcal{M}$ is smooth and strictly convex. Then, for any $\tau \in (\theta,\theta+\pi)$, there is a unique supporting line $S_\tau$ of $\Gamma_p$.

Consider any $\theta < \tau < \theta+\pi$. By the definition of $\Gamma_p$, $L_{\tau}$ separates $\Gamma_p([\theta,\tau])$ and $\Gamma_p([\tau,\theta+\pi])$. Thus, $[\Gamma_p(\tau), \Gamma_p(\tau')]$ is not parallel to $L_{\tau}$ for any $\tau' \in [\theta,\theta+\pi]$. This yields that there is an $\varepsilon > 0$ such that for any $\tau' \in (\tau-\varepsilon, \tau + \varepsilon)$ and $\tau'' \in [\theta,\theta+\pi]$, $\tau' \in [\tau,\tau'']$ implies $||\Gamma_p(\tau') - \Gamma_p(\tau'')||_M \leq ||\Gamma_p(\tau'') - \Gamma_p(\tau)||_M$, and $\tau \in [\tau',\tau'']$ implies $||\Gamma_p(\tau') - \Gamma_p(\tau'')||_M \geq ||\Gamma_p(\tau') - \Gamma_p(\tau)||_M$. In other words, for any fixed value $\tau$, $\Gamma_p$ `locally' satisfies the increasing chord property at $\Gamma_p(\tau)$. But since $\Gamma_p$ is compact on the given interval, this yields that $\Gamma_p([\theta,\theta+\pi])$ `globally' satisfies this property.

Now we show that $\Gamma_p([0,\pi])$ satisfies the increasing chord property with respect to $C$. Again, by continuity, we may assume that both $C$ and $M$ are smooth and strictly convex. Suppose for contradiction that there is some point $x \in C$ such that $||\Gamma_p(\tau) - x||_M > ||\Gamma_p(\tau') - x||_M$ for some $0 \leq \tau < \tau' \leq \pi$. According to our conditions, $\Gamma_p$ is differentiable, and thus it follows that there is some $\bar{\tau} \in (\tau,\tau')$ such that $\left. ||\Gamma_p(\sigma) - x||'_{M}\right|_{\sigma=\bar{\tau}} < 0$. This implies that the line $L_{\bar{\tau}}$ through $\Gamma_p(\bar{\tau})$ and Birkhoff orthogonal to $\Gamma'_p(\bar{\tau})$ does not separate $C$ from $\Gamma_p([\bar{\tau},\pi])$, leading to a contradiction with the definition of $\Gamma_p$.
\end{proof}

\section{The proof of Theorem~\ref{thm:main}}\label{sec:main}

Recall that, according to our conditions, $M$ is strictly convex. This yields, in particular, that the bisector of any segment is a continuous curve which intersects any line parallel to the segment in a single point. From this it also follows that the complement of any such bisector in $\mathbb{M}$ is the disjoint union of two connected and simply connected components.

Since applying any affine transformation to both $f$ and $M$ simultaneously does not change the $M$-length of $f$, we may assume (throughout this section) that $f(0)=(0,0)$, $f(1)=(1,0)$, and the lines $x=1$, $x=-1$ support $M$ at $(1,0)$ and $(-1,0)$, respectively.

We start the proof with Lemma~\ref{lem:xmonotone}, which, for the Euclidean case, coincides with \cite[Lemma 1]{Rot94}.

\begin{lemma}\label{lem:xmonotone}
The curve $f$ is strictly monotone in the $x$-direction, i.e. if $f(t) = (x(t),y(t))$, then $0 \leq t_1 < t_2 \leq 1$ implies $x(t_1) < x(t_2)$.
\end{lemma}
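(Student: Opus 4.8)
The plan is to exploit the defining geometry of the setup: the points $p=f(0)=(0,0)$ and $q=f(1)=(1,0)$ are at $M$-distance $1$, and by our normalizing assumption the vertical lines $x=\pm 1$ support $M$ at $(\pm 1,0)$. The key consequence of this support condition is that the segment $[f(0),f(1)]=[(0,0),(1,0)]$ is Birkhoff orthogonal to the vertical direction $(0,1)$; equivalently, for any two points $u,v$ with $u-v$ parallel to the horizontal axis, we have $\|u-v\|_M$ bounded below in a controlled way, and more importantly the unit ball translates behave well with respect to vertical separation. So the first step is to record precisely what the support hypothesis gives: a horizontal displacement of magnitude $s$ has $M$-norm exactly $s$ (since $x=\pm1$ support $M$ means the horizontal diameter of $M$ has the standard scaling), and any displacement with a nonzero horizontal component has $M$-norm strictly larger than the horizontal component alone when the norm is strictly convex.

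Next I would argue by contradiction. Suppose $f$ is not strictly monotone in the $x$-direction, so there exist $0\le t_1<t_2\le 1$ with $x(t_1)\ge x(t_2)$. The idea, following Rote, is to use this failure of monotonicity to violate the increasing chord property against the endpoints. The cleanest route is to consider the horizontal extent: since $f$ runs from $x=0$ to $x=1$, if it ever fails to increase in $x$ it must either have a repeated $x$-value or locally decrease, and in either case one can find four points $a,b,c,d$ in order along the curve so that the inner pair $b,c$ is forced to be farther apart (in $M$) than the outer pair $a,d$. Concretely, I would pick the outer pair to be near the two endpoints — where the horizontal displacement is near the full length $1$ — and the inner pair straddling the non-monotone portion, where a ``wasted'' horizontal back-and-forth makes $\|b-c\|_M$ exceed $\|a-d\|_M$. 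The strict convexity of $M$ is what converts the weak horizontal inequality $x(t_1)\ge x(t_2)$ into a strict violation, because any vertical deviation strictly inflates the norm while the horizontal components can be arranged to cancel or reverse.

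The main obstacle, and the step requiring the most care, is making the contradiction quantitatively tight rather than merely plausible. In the Euclidean case Rote can project onto the $x$-axis and compare squared lengths directly, but in a general strictly convex norm the norm is not given by an inner product, so I cannot simply compare coordinate-wise. The resolution is to lean entirely on the Birkhoff orthogonality encoded by the support lines $x=\pm1$: I would use that, for the horizontal direction, the $M$-distance between two points depends monotonically on the horizontal separation in a way controlled by the supporting-line geometry, and then choose the four points $a,b,c,d$ so that the outer chord $[a,d]$ is nearly horizontal (hence its $M$-length is close to its horizontal span) while the inner chord $[b,c]$ has horizontal span at least as large but with an unavoidable vertical component, forcing its $M$-length to be strictly greater by strict convexity. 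This contradicts the increasing chord property applied to the quadruple $a,b,c,d$, completing the proof. The delicate point is ensuring the four chosen points occur in the correct order along $f$ and that the limiting/continuity arguments (selecting the appropriate subarc where $x$ fails to increase) are valid for a merely continuous, possibly non-rectifiable-looking curve; here I would invoke continuity of $x(t)$ and an intermediate-value selection of the parameters.
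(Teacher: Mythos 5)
Your opening observations are correct and relevant: under the paper's normalization (the lines $x=\pm 1$ support $M$ at $(\pm 1,0)$) one has $\|v\|_M \ge |v_x|$ for every vector $v$, with strict inequality when $v_y \neq 0$, by strict convexity. But the engine of your argument fails. You want to derive a contradiction by \emph{exhibiting} a quadruple $a,b,c,d$ in order with $\|b-c\|_M > \|a-d\|_M$; producing such a quadruple from a failure of monotonicity is exactly the contrapositive of the lemma, i.e.\ it is the entire content of what is to be proved, and the recipe you sketch for finding it does not work. First, a non-monotone portion can be an arbitrarily small backward wiggle, and the chord $[b,c]$ depends only on its two endpoints, not on the ``wasted horizontal back-and-forth'' between them, so nothing forces $\|b-c\|_M$ to be large. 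Second, taking the outer pair near the endpoints is hopeless: the increasing chord property (applied to the quadruple $f(0),f(t_1),f(t_2),f(1)$) already gives $\|f(t_1)-f(t_2)\|_M \le \|f(0)-f(1)\|_M = 1$ for \emph{every} inner pair, and moreover the whole curve lies in the lens $(f(0)+M)\cap(f(1)+M)\subseteq\{0\le x\le 1\}$, whose only points on the lines $x=0$ and $x=1$ are $f(0)$ and $f(1)$ themselves (strict convexity); so an inner chord can never have horizontal span ``at least as large'' as an outer chord whose span is close to $1$, except degenerately. Third, for outer pairs strictly inside the curve you have no a priori control on the direction of $[a,d]$: asserting that it can be chosen ``nearly horizontal'' presupposes precisely the monotonicity being proved, so the argument is circular. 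Finally, the claim that the $M$-distance ``depends monotonically on the horizontal separation'' is false in a general norm; only the one-sided bound $\|v\|_M\ge |v_x|$ survives.

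For contrast, the paper never hunts for a violating quadruple; it argues locally and then globalizes. Fix $0<t<1$ and set $d_0=\|f(t)-f(0)\|_M$, $d_1=\|f(t)-f(1)\|_M$. The increasing chord property applied against the two endpoints shows that every later point $f(t')$, $t'>t$, lies in $(f(1)+d_1M)\setminus \inter(d_0M)$, a set having $f(t)$ on its boundary, and strict convexity of $M$ implies that inside a small neighborhood of $f(t)$ this set, apart from $f(t)$ itself, lies strictly to the right of the vertical line through $f(t)$. Hence $x$ strictly increases immediately beyond every parameter value, and connectedness of $[0,1]$ upgrades this local statement to global strict monotonicity. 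Any repair of your approach would have to reproduce this local step (the interaction of the two homothets of $M$ through $f(t)$); a single global comparison of one inner chord against one outer chord cannot detect it.
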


\begin{proof}
Consider the point $f(t)=(x(t),y(t))$, where $0 < t < 1$. Since $f(t) \in (f(0)+M) \cap (f(1)+M)$, we have that $0 < x(t) < 1$. Let $||f(t)-f(0)||_M=d_0$, $||f(t)-f(1)||_M=d_1$. Then, for any $t < t' \leq 1$, $f(t') \in (f(1)+d_1 M) \setminus \inter (d_0 M)$. But $f(t)$ has a neighborhood $U$ such that for any $q \in U \setminus \{ f(t) \}$ in this intersection, the $x$-coordinate of $q$ is strictly larger than $x(t)$. Thus, there is some $t < s$ such that $x(t) < x(t')$ for any $t' \in (t,s]$. Hence the fact that the interval $[0,1]$ is connected implies the assertion.
\end{proof}

\begin{remark}\label{rem:bisector}
By the definition of the increasing chord property, the fact that $f$ satisfies this property is equivalent to the property that for any $0 \leq t_1 < t_2 \leq 1$, the bisector of the segment $[f(t_1),f(t_2)]$ separates $f([0,t_1])$ and $f([t_2,1])$.
\end{remark}

Remark~\ref{rem:bisector} yields two useful corollaries. To state them, 
we define the following: if for sequences $\{t_n \}$, $\{t'_n\}$ of parameters we have that $t_n, t_n' \to t$, and for some unit vector $v \in \bd (M)$, $\frac{f(t_n)-f(t'_n)}{||f(t_n)-f(t'_n)||_M} \to v$, we say that $v$ is a \emph{(generalized) tangent vector} of $f$ at $f(t)$, and the line $L$ through $f(t)$ such that $v$ is Birkhoff orthogonal to $L$ is called a \emph{(generalized) normal line} of $f$ at $f(t)$.

\begin{corollary}\label{cor:orthogonality}
Any normal line $L$ of $f$ at a point $f(t)$ separates $f([0,t])$ and $f([t,1])$.
\end{corollary}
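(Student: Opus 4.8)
The plan is to fix parameters $0 \le s < t < u \le 1$ and to show that $f(s)$ and $f(u)$ lie in opposite closed half-planes bounded by $L$; since such $s<t$ and $u>t$ are arbitrary, this yields that $f([0,t])$ lies in one closed side of $L$ and $f([t,1])$ in the other, which is the assertion. Let $v$ be the generalized tangent vector and $\{t_n\},\{t_n'\}$ the sequences defining it, so that $t_n,t_n'\to t$ and $\frac{f(t_n)-f(t_n')}{||f(t_n)-f(t_n')||_M}\to v$. Passing to a subsequence I may assume $t_n'<t_n$: if instead the reversed inequality holds along a subsequence, the normalized chords tend to $-v$, but the normal line determined by $v$ and the one determined by $-v$ coincide, so $L$ is unchanged.

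First I would invoke Remark~\ref{rem:bisector}. For every sufficiently large $n$ we have $s<t_n'$ and $u>t_n$, hence $f(s)\in f([0,t_n'])$ and $f(u)\in f([t_n,1])$, and the bisector of $[f(t_n'),f(t_n)]$ separates these two sets. Writing $a_n=f(t_n')$ and $b_n=f(t_n)$, this says that
\[
||f(s)-a_n||_M \le ||f(s)-b_n||_M \quad\text{and}\quad ||f(u)-b_n||_M \le ||f(u)-a_n||_M
\]
for all large $n$. By Lemma~\ref{lem:xmonotone} the curve $f$ is strictly $x$-monotone, so $f(s)\neq f(t)$ and $f(u)\neq f(t)$; thus in the limit the two displayed inequalities are evaluated at points at positive distance from $P:=f(t)$, which is what makes the limiting statement nondegenerate.

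The heart of the argument is to pass to the limit $n\to\infty$. Put $\delta_n=||b_n-a_n||_M\to 0$ and $w_n=(b_n-a_n)/\delta_n\to v$, and set $\xi_n=f(s)-a_n\to f(s)-P$. The first inequality becomes $||\xi_n||_M\le ||\xi_n-\delta_n w_n||_M$; dividing by $\delta_n$ and letting $n\to\infty$, the difference quotient converges to the one-sided directional derivative of $||\cdot||_M$ at $f(s)-P$ in the direction $-v$, whose nonnegativity is precisely the statement that some supporting functional $\psi$ of $M$ at the point $(f(s)-P)/||f(s)-P||_M$ satisfies $\psi(v)\le 0$. A short convex-geometry computation then shows that the set of points $x$ with this property is exactly the closed half-plane bounded by the normal line $L$ of $f$ at $P$ (the line through $P$ whose direction is Birkhoff orthogonal to $v$); this places $f(s)$ in one closed side of $L$. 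Running the identical computation on the second inequality places $f(u)$ in the opposite closed side, which completes the proof.

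The main obstacle is this final limit step, namely identifying the limiting region with the half-plane bounded by $L$. Two points require care. First, one must justify exchanging the limit with the difference quotient; this is routine once one uses the upper semicontinuity of the one-sided directional derivative of a norm together with $\xi_n\to f(s)-P\neq 0$. Second, since $M$ is assumed strictly convex but not necessarily smooth, the supporting functional at a boundary point need not be unique, so the derived condition must be read as the \emph{existence} of a supporting functional $\psi$ with $\psi(v)\le 0$; strict convexity of $M$ is exactly what guarantees that the boundary of the resulting region is a single line, namely $L$, rather than a wedge. Should the non-smooth case prove delicate, I would emphasize that the geometric fact being used is a statement about $M$ alone: the inequalities above hold for the fixed curve $f$ in the given norm, and only the subsequent limit is a property of $M$, so no perturbation of $f$ is needed.
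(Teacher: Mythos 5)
Your proposal follows the same overall strategy as the paper's proof: both start from the separation statement of Remark~\ref{rem:bisector} applied to short chords $[f(t_n'),f(t_n)]$ and pass to the limit as $t_n,t_n'\to t$. The paper simply asserts that the bisectors of these chords converge in Hausdorff distance to $L\cap X$, where $X=(p+M)\cap(q+M)$, and concludes; you replace that unproved convergence claim by a pointwise convex-analysis argument, extracting the inequalities $||f(s)-a_n||_M\le||f(s)-b_n||_M$ and $||f(u)-b_n||_M\le||f(u)-a_n||_M$ and identifying the limiting region as a closed half-plane. This is a legitimate and in fact more detailed implementation of the same idea. One technical slip in your limit step: upper semicontinuity of $(x,d)\mapsto\phi'(x;d)$ for $\phi=||\cdot||_M$ goes the wrong way here, since it bounds $\limsup_n\phi'(\xi_n;-w_n)$ \emph{above} by $\phi'(\xi;-v)$, while you need a \emph{lower} bound on $\phi'(\xi;-v)$; moreover the difference quotient at $\xi_n$ with step $\delta_n$ dominates $\phi'(\xi_n;-w_n)$, so its nonnegativity says nothing about that derivative. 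The correct routine argument is monotonicity of difference quotients: convexity of $\gamma\mapsto||\xi_n-\gamma w_n||_M$ together with the inequality at $\gamma=\delta_n$ gives $||\xi_n-hw_n||_M\ge||\xi_n||_M$ for every $h\ge\delta_n$; fixing $h>0$ and letting $n\to\infty$ yields $||\xi-hv||_M\ge||\xi||_M$ for all $h\ge 0$, from which the supporting-functional condition follows (alternatively, take subgradients at $\xi_n-\delta_n w_n$ and use closedness of the subdifferential graph). With this repair your argument is sound.

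The more substantive point concerns which line you end up with. Your computation produces the line through $f(t)$ whose \emph{direction} is Birkhoff orthogonal to $v$ (equivalently, the supporting line of $M$ at that direction is parallel to $v$), exactly as your parenthetical states. The paper's definition immediately preceding the corollary is the reverse: $L$ is the line such that $v$ is Birkhoff orthogonal to $L$, i.e., the direction of $L$ is tangent to $\bd(M)$ at $v$. Since Birkhoff orthogonality is not symmetric in a general normed plane, these are different lines, so strictly speaking you prove the corollary under a reversed reading of the definition. However, your reading is the one the limiting argument actually supports (the bisector of a short chord in direction $v$ converges to the line whose direction is Birkhoff orthogonal to $v$, not conversely), it is the reading consistent with how normal lines are used elsewhere in the paper (the string lines $L_\tau$ in the proofs of Lemmas~\ref{lem:monotonicity} and \ref{lem:main}), and the paper's own one-line proof is subject to the same discrepancy. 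So this should be read as a flag on the formulation of the statement rather than as a gap in your argument.
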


\begin{proof}
The statement follows Remark~\ref{rem:bisector}, the fact that $f([0,1]) \subset X=(p+M) \cap (q+M)$ is bounded, and that for any sequences $\{t_n \}, \{t_n' \}$ of parameters with the property that $t_n, t_n' \to t$, and $\frac{f(t_n)-f(t_n')}{||f(t_n)-f(t_n')||_M} \to v$, where $v$ is Birkhoff orthogonal to $L$, the intersection of the bisector of $[f(t_n),f(t_n')]$ converges to $L \cap X$ with respect to Hausdorff distance. 
\end{proof}

\begin{corollary}\label{cor:convexhull}
Let $S$ be a topological disk. If $f$ has increasing chords with respect to $\bd (S)$, then it has increasing chords with respect to $S$.
\end{corollary}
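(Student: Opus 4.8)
The plan is to handle one bisector at a time and reduce everything to a connectedness argument. For $0 \le t_1 < t_2 \le 1$, set $H(t_1,t_2) = \{ x \in \mathbb{M} : ||x - f(t_1)||_M \le ||x - f(t_2)||_M \}$, the closed region of points at least as close to $f(t_1)$ as to $f(t_2)$. By the very definition of the increasing chord property with respect to a point (equivalently, by Remark~\ref{rem:bisector}), $f$ has increasing chords with respect to a point $p$ precisely when $p \in H(t_1,t_2)$ for every pair $t_1 < t_2$; hence $f$ has increasing chords with respect to a set $T$ if and only if $T \subseteq H(t_1,t_2)$ for all such pairs. Since the hypothesis gives $\bd(S) \subseteq H(t_1,t_2)$ for every pair, it suffices to prove, for each \emph{fixed} pair, the implication $\bd(S) \subseteq H(t_1,t_2) \Rightarrow S \subseteq H(t_1,t_2)$, and then intersect over all pairs.

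For the fixed-pair step I would write $H = H(t_1,t_2)$ and let $U = \mathbb{M} \setminus H$ be its open complement, i.e. the points strictly closer to $f(t_2)$. The topological boundary of $H$ is the bisector of $[f(t_1),f(t_2)]$, and by the discussion at the start of Section~\ref{sec:main} the strict convexity of $M$ guarantees that this bisector splits $\mathbb{M}$ into two connected, simply connected components, one of which is exactly $U$; in particular $U$ is connected, and it is clearly unbounded (for $x$ running to infinity in the direction $f(t_2)-f(t_1)$ one has $||x-f(t_1)||_M > ||x-f(t_2)||_M$, so $U$ contains a ray). The key step is then a proof by contradiction: if $\bd(S) \subseteq H$ but $S \not\subseteq H$, pick $z \in S \cap U$; since $z \notin H \supseteq \bd(S)$, the point $z$ lies in $\inter(S)$. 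Because $S$ is a topological disk, $\bd(S)$ is a Jordan curve, so $\mathbb{M} \setminus \bd(S)$ is the disjoint union of the bounded region $\inter(S)$ and an unbounded region. As $U$ is disjoint from $\bd(S)$ and connected, it lies entirely in one of these two regions; since $z \in U \cap \inter(S)$, we get $U \subseteq \inter(S)$, contradicting that $U$ is unbounded while $S$ is compact. This forces $S \subseteq H$, and intersecting over all pairs yields $S \subseteq \bigcap_{t_1<t_2} H(t_1,t_2)$, which is the assertion.

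I expect the only delicate points to be the two separation statements rather than any computation: that each bisector cuts $\mathbb{M}$ into two connected components (which is precisely the consequence of strict convexity recorded at the beginning of Section~\ref{sec:main}) and that the Jordan curve $\bd(S)$ separates the plane into a bounded interior and an unbounded exterior. Once these are invoked, the argument is a pure connectedness-versus-boundedness dichotomy. The main obstacle, therefore, is organizational: making sure the reduction ``with respect to $S$ $\Longleftrightarrow$ containment in every $H(t_1,t_2)$'' is stated cleanly and that the complementary region $U$ is correctly identified as the connected, unbounded component supplied by strict convexity.
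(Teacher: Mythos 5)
Your proof is correct and takes essentially the same route as the paper: reduce to a single fixed pair $t_1 < t_2$, invoke the strict-convexity fact from the start of Section~\ref{sec:main} that the bisector splits $\mathbb{M}$ into two connected components, and then argue topologically that $S$, whose boundary avoids the far component, cannot meet that component at all. The only difference is in how the last step is justified: the paper appeals tersely to simple connectivity of $S$ and of the two components, while you make the same separation argument explicit via the Jordan curve theorem together with the unboundedness of $U$ against the compactness of $S$ --- a more detailed rendering of the identical idea.
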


\begin{proof}
Consider any $0 \leq t_1 < t_2 \leq 1$, and let $B(t_1,t_2)$ denote the bisector of the segment $[f(t_1),f(t_2)]$. Then $f([0,t_1])$ is contained in one of the two connected components of $\mathcal{M} \setminus B(t_1,t_2)$. The property that $f$ has increasing chords with respect to $\bd (S)$ implies that $\bd(S)$ is contained in the same component for any $0 \leq t_1 < t_2 \leq 1$. But then, since both $S$ and the two components of $\mathcal{M} \setminus B(t_1,t_2)$ are simply connected, we have that $S$ is contained in the same component as well.
\end{proof}

Let $0 < t_1 < t_2 < \ldots < t_k < 1$ be an arbitrary subdivision of $[0,1]$. Let $a_i=f(t_i)$ for all values of $i$. We now create a new curve $g_i$ from $a_i$ to $a_{i+1}$. We want $g_i$ to be the shortest curve with the increasing chord property with respect to $f([0,t_i])$. 
To find this curve, we first prove Lemma~\ref{lem:separates}.

\begin{lemma}\label{lem:separates}
The curve $f([t_i,t_{i+1}])$ separates the arcs of the involutes of $\conv \{ f([0,t_i]) \}$ and $\conv \{ f([t_{i+1},1]) \}$ at $f(t_i)$ and $f(t_{i+1})$, respectively, with parameter values $\theta \in (-\pi,\pi))$.
\end{lemma}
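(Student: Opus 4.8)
Write $F=f([t_i,t_{i+1}])$, $C_1=\conv\{f([0,t_i])\}$, $C_2=\conv\{f([t_{i+1},1])\}$, and let $\Gamma^{(1)},\Gamma^{(2)}$ denote the involutes based at $f(t_i)\in\bd(C_1)$ and $f(t_{i+1})\in\bd(C_2)$, restricted to $\theta\in(-\pi,\pi)$. Unravelling the statement, it asserts that $\Gamma^{(1)}$ meets $F$ only at $f(t_i)$ and lies on the side of $F$ facing $C_1$, while $\Gamma^{(2)}$ meets $F$ only at $f(t_{i+1})$ and lies on the side facing $C_2$; by the standing strict convexity of $M$ these local sides are well defined. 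Since reversing the orientation of $f$ interchanges the two situations, it suffices to prove that $\Gamma^{(1)}$ does not cross $F$ to the $C_2$-side.

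The plan is to stage a competition between $F$ and $\Gamma^{(1)}$ as two curves issuing from $f(t_i)$, both enjoying the increasing chord property with respect to $C_1$. For $F$ this is inherited from $f$: taking the degenerate limit $b\to a$ in the four-point definition shows that $f$, hence its subarc $F$, has increasing chords with respect to every point of $f([0,t_i])$, and Corollary~\ref{cor:convexhull} upgrades this to increasing chords with respect to the whole topological disk $C_1$ (its hypothesis on $\bd(C_1)$ being supplied by the increasing chord property of $f$ on $f([0,t_i])$ together with the bisector form of Remark~\ref{rem:bisector}). For $\Gamma^{(1)}$ the property with respect to $C_1$ is exactly Lemma~\ref{lem:monotonicity}. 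As in the proof of that lemma, I would then pass by approximation to the case where $M$ and $C_1$ are smooth and strictly convex, so that both curves have unique tangent and normal lines and $\Gamma^{(1)}$ is strictly convex (Remark~\ref{rem:strictconvexity}); non-crossing is a closed condition, so the general case follows in the limit.

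In this regular setting the two curves leave $f(t_i)$ tangentially. Indeed, by Corollary~\ref{cor:orthogonality} the normal line of $F$ at $f(t_i)$ separates $f([0,t_i])$, hence $C_1$, from $f([t_i,1])$, so it is the unique supporting line $L_0$ of $C_1$ at $f(t_i)$; thus the tangent of $F$ at $f(t_i)$ is Birkhoff orthogonal to $L_0$, which by Lemma~\ref{lem:convexity}(iii) is precisely the tangent direction of $\Gamma^{(1)}$ at $\Gamma^{(1)}(0)=f(t_i)$. Consequently, for small $\theta>0$ the involute lies on the $C_1$-side of $F$, and the task reduces to ruling out a first crossing.

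The heart of the argument, which I expect to be the \emph{main obstacle}, is the following first-crossing analysis. Suppose $\Gamma^{(1)}$ first returns to $F$ at $x_0=\Gamma^{(1)}(\theta_0)=f(s_0)$ with $\theta_0>0$ minimal, crossing to the $C_2$-side. Two lines through $x_0$ are then in play: the tangent line $L_{\theta_0}$ of $C_1$, which contains both the touch point $c_{\theta_0}\in\bd(C_1)$ and $x_0$ and, by the unwinding construction of the involute, separates $C_1$ from the continuation $\Gamma^{(1)}((\theta_0,\pi])$; and the normal line $N_{s_0}$ of $F$, which by Corollary~\ref{cor:orthogonality} separates $C_1$ from $f([s_0,1])$. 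Since $F$ has increasing chords with respect to $c_{\theta_0}$, the quantity $\|f(s)-c_{\theta_0}\|_M$ is nondecreasing and equals $\|x_0-c_{\theta_0}\|_M=d(f(t_i),c_{\theta_0})$ exactly at $s_0$; combining this with a first-order sign analysis of the distance-to-$C_1$ function along each curve at $x_0$ — the device used at the end of the proof of Lemma~\ref{lem:monotonicity} — I would argue that an outward crossing forces $N_{s_0}$ to place part of $f([s_0,1])$ on the $C_1$-side, contradicting Corollary~\ref{cor:orthogonality}. The genuinely delicate point is the asymmetry of Birkhoff orthogonality: the normal of $F$ and the $C_1$-tangent $L_{\theta_0}$ record the orthogonality relation in opposite orders, so the comparison of the two tangent directions at $x_0$ must be routed through the common body $C_1$ rather than through a naive Euclidean angle. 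Establishing non-crossing for both branches, and arguing symmetrically for $\Gamma^{(2)}$ and $C_2$, then completes the separation.
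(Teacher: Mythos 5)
Your proposal assembles the right tools but is missing its decisive step, and that step is exactly where the paper does its real work. Your ``first-crossing analysis'' is never carried out: the sentence ``a first-order sign analysis \ldots\ I would argue that an outward crossing forces $N_{s_0}$ to place part of $f([s_0,1])$ on the $C_1$-side'' is a plan, not an argument, and you yourself concede that the asymmetry of Birkhoff orthogonality blocks the comparison of directions at $x_0$ without saying how to get around it. The paper's proof uses a completely different mechanism that your outline does not anticipate: assuming a point $q=f(t^*)$, $t_i<t^*<t_{i+1}$, lies in the open region $G$ bounded by the involute $\Gamma_0([0,\pi])$ of $K=\conv f([0,t_i])$, an arc of $\bd(K)$ and the terminal tangent line, it foliates $G$ by the one-parameter family of involutes $\Gamma_s$ based at points of $\bd(K)$, runs a tangent-line ``staircase'' $q=q_0,q_1,q_2,\ldots$ from $q$ down through this family until it hits $\bd(K)$, and takes $z_j\in f([0,t_i])$ to be tangency points on $\bd(K)$ of the tangent lines of $K$ through the $q_j$ (choosing endpoints of flat pieces so that each $z_j$ lies on $f$). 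The increasing chord property of $f$, invoked only at these finitely many points of $f$ itself, traps the portion of $f([t_i,t^*])$ in the relevant angular region inside $\conv\{z_0,q_0,q_1,\ldots\}$, which excludes $a_i=f(t_i)$ --- a contradiction. Nothing in your sketch substitutes for this construction.

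Beyond the missing core, two steps you treat as routine are defective. First, your tangency claim at $f(t_i)$ confuses the two orders of Birkhoff orthogonality: Corollary~\ref{cor:orthogonality} together with your identification of the normal of $F$ with the supporting line $L_0$ of $C_1$ says that the tangent vector of $F$ is Birkhoff orthogonal to $L_0$, whereas Lemma~\ref{lem:convexity}(iii) says that for the involute's tangent line $T$ it is $L_0$ that is Birkhoff orthogonal to $T$. Since Birkhoff orthogonality is symmetric only in Radon planes, these two conditions single out different directions in a general strictly convex norm, so the two curves typically do \emph{not} leave $f(t_i)$ tangentially, and your assertion that the involute lies locally on the $C_1$-side of $F$ has no support. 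Second, applying Corollary~\ref{cor:convexhull} with $S=C_1$ requires increasing chords of $F$ with respect to $\bd(C_1)$, but $\bd(C_1)$ contains hull segments whose relative interior points are not on $f$; as the side of a bisector in a normed plane need not be convex, knowing the property at the two endpoints of such a segment does not yield it at interior points. Relatedly, you cannot smooth $C_1$ by approximation: $C_1$ is the convex hull of a portion of the fixed curve $f$, and every usable instance of the increasing chord hypothesis lives at boundary points of $C_1$ that actually lie on $f$; replacing $C_1$ by a nearby smooth body severs that link. The paper's proof is organized precisely to avoid all three issues.
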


\begin{proof}
Set $K=\conv \{ f([0,t_i]) \}$, and observe that for any supporting line $L$ of $L$, the endpoints of $K \cap L$ belong to $f$. Let $f(\bar{t})$ be the point of $\bd (K)$ with the smallest parameter value. Note that by the connectedness of $f([0,t_i])$, the points $f(\bar{t})$ and $f(t_i)$ decompose $\bd (K)$ into two arcs on which the parameters of the points of $f$ are increasing from $f(\bar{t})$ towards $a_i=f(t_i)$. Let $\Gamma_0$ be the involute of $K$ through $a_i$. By Lemma~\ref{lem:convexity}, the curve $\Gamma=\Gamma_0([0,\pi])$ is convex.

We show that $f([t_i,t_{i+1}])$ is disjoint from the interior $G$ of the region bounded by $\Gamma$, the tangent line of $K$ Birkhoff orthogonal to $\Gamma$ and $\Gamma_0(\pi)$, and the corresponding arc of $\bd(K)$. 
Suppose for contradiction that $q=f(t^*)$, where $t_i < t^* < t_{i+1}$, is contained in this set. Note that $G$ can be decomposed into a $1$-parameter family of involutes $\Gamma_s$, where $s$ denotes the normed length of the arc between the base point $a_i(s)$ of the involute and $a_i$ on $\bd(K)$. Assume that $q \in \Gamma_{s^*}$. For simplicity, we set $q_0=q$.
Now, for any $\delta > 0$, we describe the following process: Let $q_1$ be the intersection point of $\Gamma_{s^*-\delta}$ and the tangent half line of $\Gamma_{s^*}$ at $q_0$ towards the base point $a_i(s^*)$ of $\Gamma_{s^*}$. Then let $q_2$ be the intersection point of $\Gamma_{s^*-2\delta}$ and the tangent half line of $\Gamma_{s^*-\delta}$ at $q_1$ towards $a_i(s^*-\delta)$. In general, let $q_{j+1}$ be the intersection point of $\Gamma_{s^*-(j+1)\delta}$ and the tangent half line of $\Gamma_{s^*-j\delta}$ at $q_j$ towards $a_i(s^*-j \delta)$. The process terminates when the segment reaches $\bd(K)$ at a point $\bar{q}(\delta)$. Clearly, as $\delta \to 0$, $\bar{q}(\delta) \to a_i(s^*)$. Let us choose a sufficiently small value of $\delta$ such that this base points lies in the interior of the arc of $\bd(K)$ between $a_i$ and $a_i(s^*)$.

\begin{figure}[ht]
\begin{center}
\includegraphics[width=0.8\textwidth]{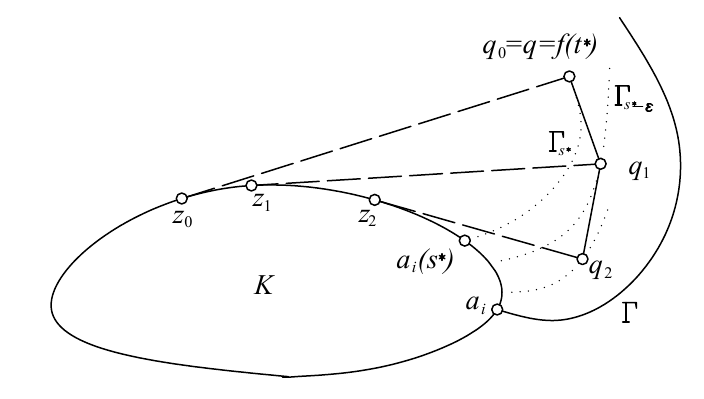}
\caption{An illustration for the proof of Lemma~\ref{lem:separates}.}
  \label{fig:notinside}
\end{center}
\end{figure}

For every $q_j$, let $z_j$ denote the tangent point of the tangent line of $\bd(K)$ through $q_j$; if this tangent line intersects $\bd(K)$ in a segment, $z_j$ denotes any of its endpoints (see Figure~\ref{fig:notinside}). Let $R_j$ denote the convex angular region bounded by the two half lines, starting at $z_j$, and passing through $q_j$ and $q_{j+1}$, respectively, and let $R$ denote the union of these regions. Note that every $z_j$ belongs to $f([0,t_i])$ by the definition of $K$ and our choice of the $z_j$. Thus, as $f$ satisfies the increasing chord property, every point of $f([t_i,t^*])$ in $R_0$ is contained in the triangle $\conv \{ z_0, q_0, q_1 \}$. Similarly, we have that the same statement is true for $R_1$ and the triangle $\conv \{ z_1, q_1, q_2 \}$. Continuing this reasoning, and using the fact that $z_{j+1} \in \conv \{ z_j,q_j,q_{j+1} \}$, we obtain that every point of $f([t_i,t^*])$ in $R$ is contained in the convex hull of the points $z_0, q_0, q_1, \ldots$.
This contradicts the assumption that $a_i$ is a point of $f([t_i,t^*])$. Thus, $f([t_i,t_{i+1}])$ does not intersect the interior of $G$.

Applying the same argument for the involute of $\conv f([t_{i+1},1])$ at $f(t_{i+1})$, we obtain that $f([t_i,t_{i+1}])$ separates these two involutes, implying also that they do not overlap.
\end{proof}

Note that $f([t_i,t_{i+1}])$ has increasing chords with respect to $f([0,t_i])$ and that Lemma~\ref{lem:separates} applies to it. We construct $g_i$ as follows. First we take the convex hull $K$ of $f([0,t_i])$ and the involute with the starting point $a_i$, on the other side we take the convex hull of $f([t_{i+1},1])$ and the involute with the starting point of $a_{i+1}$. Now $g_i$ is allowed to run in the area bounded by these involutes and $g_i$ is the shortest curve from $a_i$ to $a_{i+1}$ satisfying this condition. Generalizing \cite[Lemma 3]{Rot94} for strictly convex norms, we prove Lemma~\ref{lem:main}.

\begin{lemma}\label{lem:main}
We have the following.
\begin{enumerate}
\item[(i)] The typical shape of $g_i$ consists of
\begin{enumerate}
\item[(A)] an initial piece of one of the involutes starting at $a_i$,
\item[(B)] a straight line segment, and
\item[(C)] an initial piece of one of the involutes starting at $a_{i+1}$.
\end{enumerate}
Any of (A), (B) or (C) may be missing. If (A) and (B)exists, then (B) is tangent to (A), and the same holds for (B) and (C) if they exist. If both (A) and (C) exist, they turn in opposite directions.
\item[(ii)] If (A) exists, letting $u$ be the tangent vector of $K$ at $a_i$ in the direction on $\bd(K)$ belonging to (A), and letting $z$ be the (unique) point of the involute with a supporting line parallel to $u$, (A) is contained in the arc of the involute connecting $a_i$ and $z$. Similar property holds also for (C) if it exists. 
\item[(iii)] The curve $g_i$ has increasing chords with respect to $f([0,t_i])$.
\item[(iv)] The inverse curve of $g_i$ has increasing chords with respect to $f([t_{i+1},1])$.
\item[(v)] The curve $g_i$ has increasing chords.
\item[(vi)] The curve $g_i$ is not longer than $f([t_i,t_{i+1}])$.
\item[(vii)] $g_i \subseteq \conv \{ f([t_i,t_{i+1}]) \}$.
\end{enumerate}
\end{lemma}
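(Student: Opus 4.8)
The plan is to regard $g_i$ as the shortest path -- a \emph{taut string} -- in the compact region $R$ bounded by the two involute arcs issuing from $a_i$ and $a_{i+1}$ and by the corresponding convex-hull arcs, so that $g_i$ exists, and then to read off all seven properties from the convexity and Birkhoff-orthogonality of involutes proved in Section~\ref{sec:involutes}. Write $K=\conv f([0,t_i])$, $K'=\conv f([t_{i+1},1])$, and let $\Gamma$ denote the involute of $K$ through $a_i$. For (i), I would use that in any normed plane a shortest path in a region with locally convex boundary is locally straight except where it is pressed against a convex boundary arc, along which it runs, and that a taut string meets a single convex arc in one connected sub-arc. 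By Lemma~\ref{lem:convexity}(ii) (strictly, by Remark~\ref{rem:strictconvexity}) the two bounding involutes are convex, and by Lemma~\ref{lem:separates} they do not overlap and are separated by $f([t_i,t_{i+1}])$; hence $g_i$ touches each in at most one sub-arc and is straight in between, giving the shape (A)-(B)-(C). The tangency of (B) to (A) and to (C) is the first-order (taut-string) condition that a shortest path leaves a convex obstacle tangentially, and the opposite turning of (A) and (C) is forced because $g_i$ keeps the two involutes on opposite sides while $\bd K$ and $\bd K'$ are oriented oppositely relative to it.

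For (ii), I would invoke the support-line characterization Lemma~\ref{lem:convexity}(iii): when (B) leaves (A) tangentially at a point $b$, the direction of (B) is Birkhoff orthogonal to the tangent line $L_{\theta_b}$ of $K$, and since support directions rotate monotonically along the convex arc (Lemma~\ref{lem:convexity}(ii) together with Remark~\ref{rem:strictconvexity}), the departure cannot occur beyond the point $z$ whose support line is parallel to $u$; this is exactly the asserted containment of (A) in the arc from $a_i$ to $z$, with $z=\Gamma(\theta_z)$ and $\theta_z\le\pi$. For (iii) and (iv), by Corollary~\ref{cor:convexhull} it suffices to show that $g_i$ has increasing chords with respect to every point of $\bd K$. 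On (A) this is immediate from Lemma~\ref{lem:monotonicity}, since $\Gamma([0,\pi])$ has increasing chords with respect to $K$ and, by (ii), (A) lies within the first half-turn. To carry the property across the junction onto (B), note that (B) leaves in a direction Birkhoff orthogonal to a support line $L_{\theta_b}$ of $K$; as $K$ lies in the closed half-plane bounded by $L_{\theta_b}$ and (B) points away from it, the $M$-distance from any $x\in K$ to a point moving along (B) is non-decreasing, in the spirit of Corollary~\ref{cor:orthogonality}, and the same support-line argument carries the property onto the opposite arc (C). Statement (iv) is the mirror image, with $K,K'$ and the orientation of $g_i$ interchanged, proved identically.

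For (v), I would use that a curve has the global increasing chord property if and only if, for every point $p$ on it, the $M$-distance to $p$ is non-decreasing as one moves away from $p$ along the curve in either direction (immediate from the four-point definition by letting two of the points coincide). For $p=a_i$ and $p=a_{i+1}$ this is contained in (iii) and (iv); for an interior $p$, say on (A), the forward part of (A) recedes from $p$ because $\Gamma$ has increasing chords on $[\theta_p,\theta_p+\pi]$ by Lemma~\ref{lem:monotonicity} and, by (ii), (A) lies in this range, while the transition argument of (iii) carries monotonicity across (B) and (C); the backward direction is symmetric. Property (vi) is then almost definitional: $f([t_i,t_{i+1}])$ is a curve from $a_i$ to $a_{i+1}$ contained in $R$ by Lemma~\ref{lem:separates}, and $g_i$ is the shortest such curve, whence $\arclength_M(g_i)\le\arclength_M(f([t_i,t_{i+1}]))$. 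For (vii), I would argue from the structure (i) that each involute sub-arc lies between $f([t_i,t_{i+1}])$ and its convex hull $K$ or $K'$ -- this is precisely the separation furnished by Lemma~\ref{lem:separates} -- so it stays inside $\conv f([t_i,t_{i+1}])$, and the connecting segment is a chord of this convex set, giving $g_i\subseteq\conv f([t_i,t_{i+1}])$.

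The main obstacle is the propagation of the increasing chord property across the tangential junctions (A)-(B)-(C) and onto the far involute arc in (iii)--(v). In Rote's Euclidean argument this rests on ordinary perpendicularity, whereas here Birkhoff orthogonality is not symmetric, so the perpendicularity computations must be replaced throughout by support-line arguments built on Lemma~\ref{lem:convexity}(iii), Remark~\ref{rem:strictconvexity} and Corollary~\ref{cor:orthogonality}. Getting these transitions right -- rather than any single piece of the decomposition -- is where the work lies; once they are in place, (vi) and (vii) follow with little additional effort.
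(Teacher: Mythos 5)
Your proposal reads Lemma~\ref{lem:main} as a statement purely about taut strings between two convex obstacles, and that is exactly where it breaks: parts (ii) and the (C)-half of (iii) (hence also (iv) and (v)) are not consequences of the shortest-path structure together with Lemmas~\ref{lem:convexity}, \ref{lem:monotonicity} and \ref{lem:separates}; they require a second, direct appeal to the increasing chord property of the original curve $f$, which your argument never makes. Concretely, for (ii) you claim that ``since support directions rotate monotonically along the convex arc, the departure cannot occur beyond the point $z$.'' This is a non sequitur: monotone rotation of supporting lines says nothing about \emph{where} the tangent segment (B) leaves (A). The departure point is dictated by the position of $a_{i+1}$ and of the other involute, and nothing in the taut-string picture prevents (A) from wrapping past $z$ --- the region permitted by Lemma~\ref{lem:separates} allows the involute to turn through a parameter interval of length $\pi$, whereas (ii) asserts a strictly smaller bound. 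The paper excludes this by contradiction using $f$ itself: if $z$ were an interior point of (A), then $f([t_i,t_{i+1}])$ would have to meet the half-line of the supporting line $L_z$ of $K$ beyond $z$ at a point $\bar z=f(\bar t)$ with $t_i<\bar t<t_{i+1}$, and combining this with points $f(s)$, $s<t_i$, lying close to the supporting line of $K$ at $a_i$ gives $||f(s)-\bar z||_M<||a_i-\bar z||_M$, violating the increasing chord property of $f$.

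The same gap occurs in (iii). Your half-plane argument is sound for (A) and (B): there the line carrying (B) is a supporting line of the involute of $K$, so Lemma~\ref{lem:convexity}(iii) ties it to a supporting line of $K$ itself, and one can indeed show that the distance from any point of $K$ is non-decreasing along (B). But this does \emph{not} ``carry onto the opposite arc (C)'': the normal lines of (C) are tangent lines of $K'=\conv f([t_{i+1},1])$, and for two arbitrary convex bodies a tangent line of $K'$ can perfectly well cut through $K$, so there is no support-line argument relating (C) to $K$ without further input. The paper supplies that input from $f$: if a normal line $L$ of (C) failed to separate $K$ from the remaining part of the involute, then $L$ would contain points $f(s_1),f(s_2),f(s_3)$ with $s_1\le t_i<s_2<t_{i+1}\le s_3$ and $f(s_3)\in[f(s_2),f(s_1)]$, again contradicting the increasing chord property of $f$. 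Since your (iv) and (v) lean on the same transition step, the gap propagates to them. A smaller point: you state the orthogonality backwards --- the direction of (B) is not Birkhoff orthogonal to $L_{\theta_b}$; rather $L_{\theta_b}$ is Birkhoff orthogonal to the line carrying (B) (Lemma~\ref{lem:convexity}(iii)) --- and since Birkhoff orthogonality is asymmetric, the half-plane computation only works when run with this order. You correctly identified the junction transitions as ``where the work lies,'' but the mechanism that makes them work is the increasing chord property of $f$, not the convexity of the involutes.
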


\begin{proof}
The statement in (i) follows from Lemma~\ref{lem:separates}.

We prove (ii) by contradiction. Let $L$ be the supporting line of $K$ at $a_i$ with tangent vector $u$, and let $L_z$ denote the supporting line of $K$ containing $z$; here $L_z$ is Birkhoff orthogonal to the supporting line of the involute at $z$. By our assumption, $z$ belongs to (A). Let $y$ be a point of $f([0,t_i])$ on $L_z$. We either have a point $f(s)$, with $0 \leq s < t_i$ on the intersection of $L$ with the part of $\bd(K)$ between $a_i$ and $y$, or there is a sequence of points $f(s_j)$ on this part of $\bd(K)$ such that the direction of the segment $[f(s_j), f(t_i)]$ approaches that of $L$. Note that if $d=||z-a_i||_M$, then $f(s) \in z + d \inter (M)$ in the first case, and in the second case the same holds for $f(s_j)$ if $j$ is sufficiently large. This yields that $||f(s)-z||_M < d$ in the first case and, for large $j$, $||f(s_j)-z||_M < d$ in the second case. Since $z$ is a point of (A), $f([t_i,t_{i+1}])$ intersects the half line of $L_z$, starting at $y$ and containing $z$, at a point $\bar{z}=f(\bar{t})$, where $t_i < \bar{t} < t_{i+1}$. Then we have $z \in [y,\bar{z}]$ (see Lemma~\ref{lem:separates}), and since $L_z$ is Birkhoff orthogonal to $L$ this implies that $||f(s)-\bar{z}||_M < ||a_i-\bar{z}||_M$ in the first case, and $||f(s_j)-\bar{z}||_M < ||a_i-\bar{z}||_M$ in the second case, contradicting our assumption that $f$ satisfies the increasing chord property.

Next, we prove (iii). To show that (A) satisfies the increasing chord property with respect to $K$, we can apply Lemma~\ref{lem:monotonicity}. The same statement for (B) follows from the fact that the segment (B) is tangent to (A). To show that (C) also satisfies this property, note that if there is a sequence of convex disks $K_n$ and curves $c_n$ satisfying this property such that $K_n \to K$ and $c_n \to c$, then $c$ also have the increasing chord property to $K$. Thus, we may assume that (C) is a differentiable curve. Then, using the idea in the second half of the proof of Lemma~\ref{lem:monotonicity}, it is sufficient to check if the normal lines of (C) separate $K$ from the part of the involute ending at $a_{i+1}$. On the other hand, such a normal line $L$ is tangent to $K'=\conv (f([t_{i+1},1]))$, and thus, like in the proof of \cite[Lemma 3]{Rot94}, there exist points $f(s_1), f(s_2), f(s_3)$ on $L$ such that $0 \leq s_1 \leq t_i, t_i < s_2 < t_{i+1}, t_{i+1} \leq s_3 \leq 1$, satisfying $f(s_3) \in [f(s_2),f(s_1)]$, which contradicts the assumption that $f$ satisfies the increasing chord property.

The statement in (iv) follows in the same way as (iii). The statement in (v) can be proved using Lemma~\ref{lem:monotonicity}, and the statements in (i) and (ii).
The statement in (vi) follows from the definition of $g_i$, and the statement in (vii) can be obtained from Lemma~\ref{lem:separates} by considering the possible paths connecting $a_i$ to $a_{i+1}$.
\end{proof}

Based on Lemma~\ref{lem:main}, carrying out the procedure described above and gluing together the pieces $g_i$ we obtain 
a curve $g$ satisfying the following:
\begin{itemize}
\item[(a)] it satisfies the increasing chord property;
\item[(b)] $\arclength_M(f)-\varepsilon \leq \arclength_M(g) \leq \arclength_M(f)$ (see (vi) of Lemma~\ref{lem:main});
\item[(c)] it is the union of finitely many strictly convex and strictly concave arcs (as functions of $x$), and linear segments (see Remark~\ref{rem:strictconvexity}).
\end{itemize}

From now on we will work with the curve $g$. In the next part, we `convexify' the curve $g:[0,1]\xrightarrow{}\mathbb{R}^2$ in the following way: we cut $g$ into infinitesimal pieces and rearrange them according to slope. Since $g$ consists of finitely many convex parts, we do not need to worry about problems with limits and give an explicit construction for the convexified curve $g'$.

With a little abuse of notation, let $0 = t_0 < t_1 < t_2 < \ldots < t_{m-1} < t_m = 1$ be a subdivision of $[0,1]$ such
that each piece $g([t_{i-1},t_i])$ is either a straight line segment, or strictly convex, or strictly concave. To define the convexified curve $g'$ we need to re-parameterize $g$. Our parameter will be essentially the angle of the tangent line at the point, but because of the straight line segments we introduce a more complex parameterization.
Namely, the convexified curve $g': [0,U]\xrightarrow{}\mathbb{R}^2$ will be parameterized by a parameter $u$ as follows.

For every value of $i$ and parameter $u$, we choose an angle $\alpha(u)$ in the range $\left( -\frac{\pi}{2}, \frac{\pi}{2} \right)$ and  two points $r_i(u)$ and $s_i(u)$ from the interval $[t_{i-1},t_i]$.
If $g([t_i,t_{i+1}])$ is not a straight line segment, we set $\alpha(u) = u$, and choose these values such that, when measured by angle with the positive $x$-axis, for every $i$, $g([r_i(u), s_i(u)])$ consists of the points of $g([t_{i-1},t_i])$ where the angle of $g$ is not less than $\alpha(u)$. This yields, in particular, that if $g([t_{i-1},t_i])$ is strictly concave, then $r_i(u) = t_{i-1}$. Furthermore, to define $s_i(u)$ observe that there is at most one parameter value $t_{i-1} \leq t \leq t_i$ such that $g(t)$ lies on the supporting line of $g([t_{i-1},t_i])$ with angle $\alpha(u)$. If there is such a value $t$, we choose $s_i(u)$ as this value, and in the opposite case we set $s_i(u)=t_i$. Similarly, if $g([t_{i-1},t_i])$ is strictly convex, we set $s_i(u)=t_i$, and define $r_i(u)$ either as $t_{i-1}$, or the unique value $t \in [t_{i-1},t_i]$ such that $g(t)$ lies on the supporting line of $g([t_{i-1},t_i])$ with angle $\alpha(u)$.

If $g([t_{i-1},t_i])$ is a straight line segment, we set $r_i(u)=s_i(u)=t_i$ as long as $\alpha(u)$ is steeper than the segment. Then there is an interval $[u,u']$ during which $\alpha(u)$ remains stationary and parallel to the segment. In this interval $r_i(u)$ decreases to $t_{i-1}$. For higher values of $u$, $r_i(u)$ and $s_i(u)$ remain constant again.

Clearly, $\alpha(u)$ is decreasing. It is easy to construct such parameterizations $\alpha(u)$, $r_i(u)$ and $s_i(u)$ with $u \in [0,U]$, where $\alpha(u)$ is continuous.

Now we define the curve $g':[0,U]\xrightarrow{}\mathbb{M}$ by the following equation:
\begin{equation}\label{rep:1}
   g'(u)=g'(u)-f(0)=\sum_{i=1}^m \left(g(s_i(u))-g(r_i(u)) \right). 
\end{equation}
Note that this is only a weakly monotonic parameterization of $g'$.

Clearly, $g'$ is a curve from $f(0)$ to $f(1)$. Furthermore, it is convex because at each point $g'(u)$ the line with direction $\alpha(u)$ is a supporting line. The curve $g'$ lies in the upper half-plane $\{ y \geq 0\}$, and moreover, it has the same length as $g$. Indeed, as $u$ increases by a small amount the parts of $g$ that are added to $g'$ are essentially parallel. More specifically, we have
\[
g'(u+\epsilon)-g'(u)=\sum_{i=1}^m \left( g(r_i(u))-g(r_i(u+\epsilon)) \right) +\sum_{i=1}^m \left( g(s_i(u+\epsilon)) - g(s_i(u)) \right).\]
The directions of the non-zero differences of vectors in these sums are in the range $[\alpha(u+\epsilon), \alpha(u)]$. Hence, the length of the vector sum differs from the sum of the lengths of the vectors by at most some factor $(1+\delta(\varepsilon)) \cos{(\alpha(u)-\alpha(u+\epsilon))}$. Here the constant
$\delta(\varepsilon)$ is defined as the smallest value such that the (Euclidean) radial function $\rho_M : \Sph^1 \to \Re$ of the unit disk $M$ satisfies the inequality $\left| \rho_M(\beta-\rho_M((\alpha(u)) \right| \leq \delta(\varepsilon)$ for any $\beta \in [\alpha(u-\epsilon), \alpha(u+\varepsilon)]$. For this constant we clearly have $\delta(\varepsilon) \to 0$ as $\varepsilon \to 0^+$. Thus, the constant $(1+\delta(\varepsilon)) \cos{(\alpha(u)-\alpha(u+\epsilon))}$ can be made arbitrarily close to $1$.

\begin{figure}[ht]
\begin{center}
\includegraphics[width=0.45\textwidth]{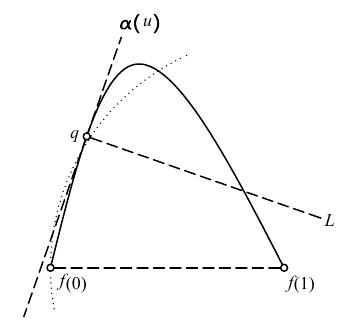}
\caption{The point $q$ on the curve $g'$.}
  \label{fig:Rote1}
\end{center}
\end{figure}

Now we show that each point of the curve $g'$ has an $M$-distance at  most $1$ from $f(0)$ and $f(1)$. Suppose for contradiction that e.g. the largest distance from $f(1)$ is greater than $1$.
Then there must be a point $q$ on $g'$ different from $f(0)$ with a supporting line of direction $\alpha$ such that the line $L$ which is Birkhoff orthogonal to the supporting line at the point $q$ passes above $f(1)$, i.e. it does not separate $f(0)$ and $f(1)$ (see Figure~\ref{fig:Rote1}). We take $\alpha$ in the direction of the curve $g'$ running from $f(0)$ to $f(1)$, and we find a parameter value $u$ such that $\alpha(u)$ is that direction. According to the representation in (\ref{rep:1}), we can cut the curve $g$ into parts at the points $r_j(u)$ and $s_j(u)$. We get two kinds of parts: The `upward' parts $g([r_j(u), s_j(u)])$, where the angle is always at least $\alpha$, and the remaining `downward' parts $g([s_{j-1}(u), r_j(u)])$, $g([0, s_1(u)])$ and $g([t_m(u), 1])$, where the angle is smaller. By discarding empty `parts' and merging together adjacent non-empty parts which are all upward or which are all downward we can assume that upward and downward parts alternate on the curve, and that the parts are proper parts of positive length. The sum of the upward vectors is $q=q-f(0)$, by \ref{rep:1}, and the downward vectors add up to $f(1)-q$. 

Since $f(1)$ is below the line $L$, there is a downward part $g([u,v])$ whose endpoint is below the line through its starting point parallel to $L$. This part must be adjacent to some upward part (see Figure~\ref{fig:Rote2}).
However, this leads to a contradiction with the increasing chord property of the involute: As one moves away from the common subdivision point on the upward part the distance to the other endpoint of the downward part decreases.

\begin{figure}[ht]
\begin{center}
\includegraphics[width=0.45\textwidth]{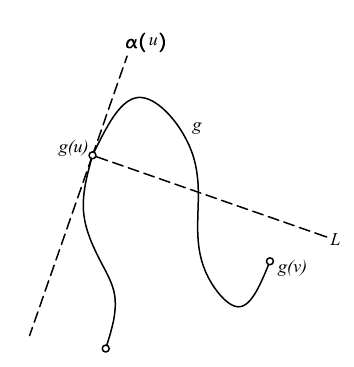}
\caption{The arc $g([u,v])$.}
  \label{fig:Rote2}
\end{center}
\end{figure}

Now we have shown that the convex curve $g'$ is contained in the convex disk $M \cap (f(1)+M)$. By symmetry, we also have that the reflection $g''$ of $g'$ to the midpoint of $[f(0),f(1)]$ is contained in $M \cap (f(1)+M)$. The curves $g'$ and $g''$ do not cross, as they are separated by the $x$-axis. It is a well known fact that for any two convex disks $K \subseteq L$ in a normed plane $\mathbb{M}$, the inequality $\perim_M(K) \leq \perim_M(L)$ holds (see e.g. \cite{BL24, Schaffer}). This yields that $\arclength_M(g') \leq \frac{1}{2} \perim_M(M \cap (f(1)+M))$, which is what we wanted to prove.

\section{The proof of Theorem~\ref{thm:extremals}}\label{sec:extremals}

We prove (i)-(iv) of Theorem~\ref{thm:extremals} in separate subsections.

\subsection{The proof of (i)}

Consider points $p,q \in \mathbb{M}$ with $||q-p||_M=1$. Let $X=(p+M) \cap (q+M)$. Since $\bd(X)$ is a connected curve that is the union of the compact sets $\bd(p+M) \cap \bd(X)$ and $\bd(q+M) \cap \bd(X)$, there is a point $x \in \bd(X)$ which belongs to the boundaries of both $p+M$ and $q+M$. Let the reflection of $x$ to the midpoint of $[p,q]$ be $x'$. Let $X'=\conv \{ x,p,q,x'\}$. Since $X' \subseteq X$ and both $X, X'$ are convex disks, we have $\perim_M(X') \leq \perim_M(X)$. On the other hand, by our choice of $x$, we have $\perim_M(X')=4$. This yields the inequality part. To show the equality part, note that if $M$ is a parallelogram or an affinely regular hexagon, and $q$ is a vertex of $p+M$, then $\perim_M\left( (p+M) \cap (q+M)\right)=2$.

\subsection{The proof of (ii)}\label{subsec:proof2}

We show that if $M$ is an affinely regular hexagon and $||q-p||_M=1$, then $\perim_M((p+M) \cap (q+M)) = 4$. This, combined with the statement in (i), yields (ii). To prove it we can assume that $M$ is a regular hexagon, since the statement remains the same if we apply an affine transformation on $M$.

Now, the proof is based on the connection of Reuleaux triangles in $\mathbb{M}$ and affinely regular hexagons inscribed in $M$. This connection is described in \cite{C66} (see also \cite{L16}). According to this, every Reuleaux triangle, in which the distance of the vertices is one, can be obtained in the following way: Choose an $o$-symmetric affinely regular hexagon $H$ inscribed in $M$. Let the vertices of $H$ be $p,q, r, -p, -q, -r$ (see Figure~\ref{fig:hexagon}). Then $r=q-p$, and the vertices decompose $\bd(M)$ into six arcs: $\Gamma_1, \Gamma_2, \Gamma_3$ and their reflections to $o$ (see Figure~\ref{fig:hexagon}). Choose any two consecutive vertices of $H$, say $p,q$. Then $R=M \cap (p+M) \cap (q+M)$ is a Reuleaux triangle in $\mathbb{M}$, and $\bd(R)$ can be decomposed into three arcs which are translates or reflections of $\Gamma_1, \Gamma_2, \Gamma_3$, respectively. In particular, we have $\perim_M(R)=\frac{1}{2} \perim_{M}(M)$.
We obtain similarly that if $X= (p+M) \cap (q+M)$, then $\perim_M(X)= \perim_M(M) - 2\arclength_M(\Gamma_i)$ for
some $i \in \{ 1,2,3 \}$.

\begin{figure}[ht]
\begin{center}
\includegraphics[width=0.5\textwidth]{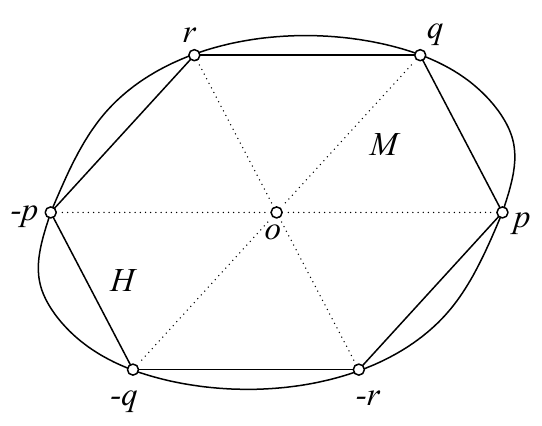}
\caption{An affinely regular hexagon $H$ inscribed in the $o$-symmetric convex disk $M$. The dotted diagonals of $H$ decompose it into six equilateral triangles of unit edgelength.}
  \label{fig:hexagon}
\end{center}
\end{figure}

To finish the proof, we recall the well-known fact that for every $p \in \bd(M)$ there is an affinely regular hexagon inscribed in $M$. If $\bd(M)$ does not contain two translates of the segment $(1+\varepsilon)[o,p]$ for some $\varepsilon > 0$, then this hexagon is unique, if it does, then every such hexagon has two opposite sides contained in $\bd(M)$. Consider a point $p \in \bd(M)$, and recall that $M$ is a regular hexagon. Let $H$ be the convex hull of $p$ and its rotated copies around $o$ by angles $\frac{2i\pi}{3}$ for $i=1,2,\ldots, 5$. Then $H$ is an (affinely) regular hexagon inscribed in $M$ and having $p$ as a vertex. This hexagon has no side contained in $M$, implying that this is the only such hexagon inscribed in $M$. Note that $\perim_M(M)=6$. Thus, by symmetry, the vertices of $H$ decompose $\bd(M)$ into six arcs of unit length. This yields that if $q,r$ are consecutive vertices of $H$, then $\perim_M((q+M) \cap (r+M))=4$, implying (ii).

\subsection{The proof of (iii)}\label{subsec:proof3}

The proof is based on the connection between Reuleaux triangles and affinely regular hexagons detailed in the proof of (ii).

For any $M \in \mathcal{M}_2$, let $\mathcal{H}_M$ denote the family of affinely regular hexagons inscribed in $M$.
Let $H \in \mathcal{H}_M$. Then, as we remarked in the previous subsection, the vertices of $H$ decompose $\bd(M)$ into six arcs, which in the following we denote by $\Gamma_1^H$, $\Gamma_2^H$, $\Gamma_3^H$, $-\Gamma_1^H$, $-\Gamma_2^H$ and $-\Gamma_3^H$.
Then, for any fixed $M \in \mathcal{M}$, we have:
\begin{multline*}
\min \{ L_M(p,q): ||p-q||_M=1 \} = \\
\frac{1}{2} \perim_M(M) - \max \{ \arclength_M(\Gamma_i^H): i=1,2,3 \hbox{ and } H \in \mathcal{H}_M \}.
\end{multline*}

It is well known that for any $M \in \mathcal{M}$, $\perim_M(M) \leq 8$ (see e.g. \cite{Golab}). On the other hand, we clearly have
$\max \{ \arclength_M(\Gamma_i^H): i=1,2,3 \} \geq \frac{1}{6} \perim_M(M)$. Thus,
\[
\min \{ L_M(p,q): ||p-q||_M=1 \} \leq \frac{1}{3} \perim_M(M) \leq \frac{8}{3}.
\]

For the other inequality in (iii), we observe that if $M$ is a Euclidean disk, then $\min \{ L_M(p,q): ||p-q||_M=1 \} = \frac{2\pi}{3}$.

\subsection{The proof of (iv)}

Let $(p+M) \cap (q+M) = X$. We need to show that $\perim_M \left( X \right) \leq 6$.
Let $L_p$, $L_q$ be parallel supporting lines of $X$ at $p$ and $q$, respectively, and let  $L_+$ and $L_-$ be the supporting lines of $X$ parallel to the line $L$ of $[p,q]$. Let the intersection points of $L_+$ with $L_p$ and $L_q$ be $x_p^+$ and $x_q^+$, respectively, and we define $x_p^-$ and $x_q^-$ similarly (see Figure~\ref{fig:MaxMax}). By the properties of supporting lines, $X \subseteq P=\conv \{ x_p^+, x_p^-, x_q^+,x_q^- \}$. Since $X$ is convex, this implies that $\perim_M(X) \leq \perim_M(P) = ||x_p^+-x_p^-||_M + ||x_q^+-x_q^-||_M + ||x_q^+-x_p^+||_M + ||x_q^--x_p^-||_M = 4 ||x_p^+ - p||_M + 2$. Thus, we need to show that $||x_p^+ - p||_M \leq 1$, or equivalently, $x_p^+ \in p+M$.

\begin{figure}[ht]
\begin{center}
\includegraphics[width=0.7\textwidth]{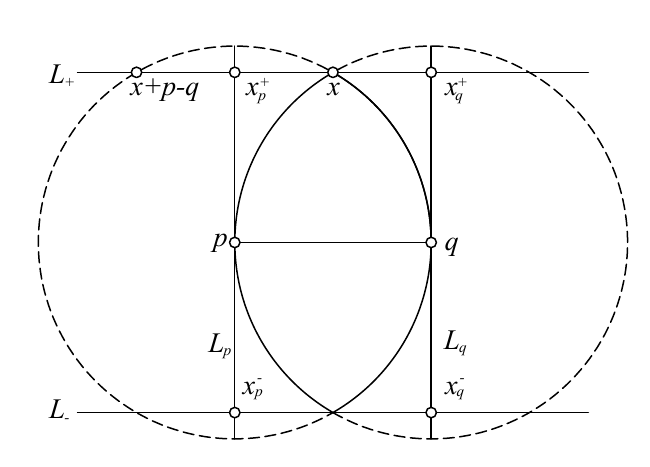}
\caption{An illustration for the proof of (iv) of Theorem~\ref{thm:extremals} with a Euclidean unit disk as $M$.}
  \label{fig:MaxMax}
\end{center}
\end{figure}

To show this, note that there is a point $x$ of $X=(p+M) \cap (q+M)$ on the segment $[x_p^+,x_q^+]$, and since $x \in q+M$, we have $x + (p-q) \in p + M$. But the distance of $x$ and $x+(p-q)$ is equal to $||p-q||_M = 1 = ||x_p^+-x_q^+||_M$, showing that $x+p-q$ is outside $P$. 

Thus, $x_p^+ \in [x,x+p-q] \subseteq p+M$. For the equality case, we observe that if $M$ is a parallelogram and $[p,q]$ is parallel to a side of $M$, then $\perim_M\left( (p+M) \cap (q+M)\right)=6$.

\section{Additional remarks and questions}\label{sec:remarks}

Not much is known about the maximum arclength of curves with the increasing chord property in higher dimensions even in the Euclidean case. The only published result in this direction is due to Rote \cite{Rot94}, who gave an upper bound of the arclengths these curves in $3$-dimensional Euclidean space. 
In addition, answering a question of Croft, Falconer and Guy \cite[G3]{CFG91} in the negative, he observed that the arc consisting of three consecutive edges of a Reuleaux tetrahedron does not satisfy the increasing chord property, but a slight modification of such a curve does.
In particular, nothing is known of the order of magnitude of the maximum arclength of these curves in $d$-dimensional Euclidean space, though the above example may indicate that the answer might be linear.
Our next example shows that the situation can be different among curves with this property in other normed spaces. 

\begin{proposition}\label{prop:highdim}
    Let $M_d$ be the cube $M_d=[-1,1]^d$. Then there is a curve $\Gamma_d$ in $\mathbb{M}_d$ with the increasing chord property with its endpoints unit distance apart and satisfying $\arclength_{M_d}(\Gamma_d) = 2^d -1$.
\end{proposition}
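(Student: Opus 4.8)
The plan is to construct $\Gamma_d$ by recursion on $d$, doubling the previous curve and joining the two copies by a unit segment in the new coordinate direction. I write a point of $\Re^d$ as $w=(w',w_d)$ with projection $w'\in\Re^{d-1}$ and last coordinate $w_d\in\Re$, and recall that $||w||_{M_d}=\max\{\,||w'||_{M_{d-1}},\,|w_d|\,\}$. For $d=1$ let $\Gamma_1$ be the segment from $0$ to $1$, which trivially has the increasing chord property, length $1$, and endpoints at unit distance. Assuming $\Gamma_{d-1}$ has already been built with endpoints $P_0,P_1$ at unit distance, length $2^{d-1}-1$, and the increasing chord property, I define $\Gamma_d$ as the concatenation of three arcs: the copy $\Gamma_{d-1}\times\{0\}$ traversed from $(P_0,0)$ to $(P_1,0)$; the segment from $(P_1,0)$ to $(P_1,1)$; and the reversed copy $\Gamma_{d-1}\times\{1\}$ traversed from $(P_1,1)$ back to $(P_0,1)$. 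This curve is continuous, runs from $(P_0,0)$ to $(P_0,1)$, so its endpoints are at $M_d$-distance $1$, and its length is $(2^{d-1}-1)+1+(2^{d-1}-1)=2^d-1$, as required.

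Before verifying the increasing chord property I would record one elementary but crucial fact: \emph{a curve with the increasing chord property whose endpoints are unit distance apart has diameter $1$}, since applying the defining inequality with the two endpoints as the outer pair bounds every chord by $1$. In particular $\diam_{M_{d-1}}(\Gamma_{d-1})=1$, and this already forces every chord of $\Gamma_d$ to have $M_d$-length at most $1$: the projections of any two points of $\Gamma_d$ lie on $\Gamma_{d-1}$ and hence differ by at most $1$ in $M_{d-1}$, while their last coordinates both lie in $[0,1]$.

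To prove that $\Gamma_d$ itself has the increasing chord property, I take points $w_1\le w_2\le w_3\le w_4$ in the order of the curve and let $j^\ast$ be a coordinate realizing $||w_2-w_3||_{M_d}$. If $j^\ast=d$, I use that the last coordinate is monotone nondecreasing along $\Gamma_d$ (it is $0$ on the first copy, increasing on the segment, $1$ on the last copy), so $||w_2-w_3||_{M_d}=|w_{2,d}-w_{3,d}|\le|w_{1,d}-w_{4,d}|\le||w_1-w_4||_{M_d}$. If instead $j^\ast\le d-1$, then $||w_2-w_3||_{M_d}$ equals the $M_{d-1}$-distance $||w_2'-w_3'||_{M_{d-1}}$ of the projections. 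Here the key dichotomy is: either $||w_1-w_4||_{M_d}=1$, in which case the diameter bound already gives $||w_2-w_3||_{M_d}\le 1=||w_1-w_4||_{M_d}$; or $||w_1-w_4||_{M_d}<1$, which by monotonicity of the last coordinate forces all four points to lie on a single side of the fold (either on the first copy together with the connecting segment, or on the segment together with the last copy), since the only way to attain last-coordinate difference $1$ is to have one point on each of the two copies. On such a side the projection to $\Re^{d-1}$ is an order-preserving reparametrization of a sub-arc of $\Gamma_{d-1}$ (all points of the segment projecting to the terminal point $P_1$), so the projected points occur in $\Gamma_{d-1}$-order, possibly with coincidences; invariance of the increasing chord property under reversal covers the second side. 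The inductive hypothesis on $\Gamma_{d-1}$ then yields $||w_2-w_3||_{M_d}=||w_2'-w_3'||_{M_{d-1}}\le||w_1'-w_4'||_{M_{d-1}}\le||w_1-w_4||_{M_d}$, the last step because passing to a projection cannot increase the norm.

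The step I expect to be the main obstacle is exactly this mixed case, where the separation of the inner points comes from an old coordinate while the outer points need not lie in the same copy: the naive attempt to apply the inductive hypothesis fails because the projection of $\Gamma_d$ folds back on itself and so does not traverse $\Gamma_{d-1}$ monotonically. The dichotomy above is what resolves it, as the diameter bound disposes of the genuinely two-sided configurations and leaves only one-sided configurations on which the projection is monotone and the induction applies.
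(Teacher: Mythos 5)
Your construction coincides exactly with the paper's: the same doubling recursion (copy, unit segment in the new coordinate direction, reversed translated copy), the same recurrence $\arclength_{M_d}(\Gamma_d)=2\arclength_{M_{d-1}}(\Gamma_{d-1})+1$, and the same overall induction. Where you genuinely diverge is in how the increasing chord property is verified. The paper argues piece by piece: it uses that any two points lying one in each of the two copies are at distance exactly $1$, that the distance from a point of either bounding hyperplane to a point moving along the connecting segment is nondecreasing, and that a point $z$ of the segment is at distance exactly $1$ from every point of a copy except those on the final edge abutting the junction --- this last exact-distance claim is stated without proof and itself needs a small nested induction on the construction. You instead fix a coordinate realizing the max-norm of the inner chord and split into two cases: the new coordinate is handled by its monotonicity along the curve, and the old coordinates are handled by projecting to $\Re^{d-1}$, where your dichotomy (outer chord equal to $1$, disposed of by the elementary diameter lemma; outer chord less than $1$, which confines all four points to one side of the fold, making the projection a weakly monotone reparametrization of $\Gamma_{d-1}$) lets you invoke the inductive hypothesis together with the inequality $||w_1'-w_4'||_{M_{d-1}}\leq||w_1-w_4||_{M_d}$. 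Your route buys a more self-contained argument: the diameter lemma and the projection inequality replace the paper's unproved exact-distance facts, and the delicate mixed configurations (inner chord realized by an old coordinate, outer points in different pieces) are handled uniformly rather than by inspecting the final edge. The cost is that your argument leans on the coordinatewise $\ell_\infty$ structure of the cube norm, but that is all the proposition requires. The proof is correct.
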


\begin{proof}
    We present a recursive construction of a curve $\Gamma_d$ for every dimension $d \geq 1$ that yields the previously stated value, and prove by induction on $d$ that $\Gamma_d$ preserves the increasing chord property for all $d \geq 1$.

    We begin with the base case. For $d=1$, i.e. the curve $\Gamma_1$ is a line segment of unit length, trivially satisfying the increasing chord property. The curve $\Gamma_2$ consists of three unit-length edges of the square $[0,1]^2$. This can be interpreted as follows: we identify $\Re$ with the $x$-axis, take two copies of $\Gamma_1$, place them one unit apart, and connecting them with a single edge of the square.

    The general construction proceeds analogously: given a curve $\Gamma_d$ in dimension $d$, we obtain $\Gamma_{d+1}$ as follows.
    We identify the coordinate hyperplane $\{ x_{d+1} = 0\}$ with $\Re^d$, consider the curves $\Gamma_d$ and $(0,0,\ldots, 0,1)+ \Gamma_d$, and connect an endpoint of $\Gamma_d$ with the corresponding endpoint of its translate. Note that the curves $\Gamma_d$ and $(0,0,\ldots, 0,1)+ \Gamma_d$ are in opposite facets of the cube $[0,1]^{d+1}$.

    The arclength of the curve $\Gamma_d$ satisfies the recurrence relation
    \[
    \arclength_{M_{d+1}} (\Gamma_{d+1}) = 2 \arclength_{M_{d}} (\Gamma_{d}) + 1, \quad \text{with } \arclength_{M_{1}} (\Gamma_{1}) = 1.\]
    Then, by induction we have $\arclength_{M_{d}} (\Gamma_{d}) = 2^d-1$.

    We now prove by induction on $d$ that $\Gamma_d$ satisfies the increasing chord property. In the cases $d=1$ and $d=2$ the statement is trivial. Now, assume that for some $d \geq 2$, $\Gamma_d$ satisfies this property. Consider the curve $\Gamma_{d+1}$. As described before, this curve is the union of $\Gamma_d$ and $\Gamma_d'=(0,0,\ldots, 0,1)+ \Gamma_d$, and an edge $[p,p']$ connecting an endpoint $p$ of $\Gamma_d$ and an endpoint $p'$ of $\Gamma_d'$.
    
By the induction hypothesis $\Gamma_d$ and its translate satisfy the increasing chord property. Furthermore, for any $q \in \Gamma_d$ and $q' \in \Gamma_d'$ we have $||q-q'||_{M_{d+1}}=1$. We note also that for any point $q$ in the hyperplane through $\Gamma_d$, the distance of $q$ from a point $z$ of $[p,p']$ does not decrease as $z$ moves on the segment from $p$ to $p'$; a similar statement holds also for a point of the hyperplane of $\Gamma_d'$. Thus, it remains to show that $\Gamma_d$ and $\Gamma_d'$ satisfy the increasing chord property with respect to any $z \in [p,p']$. To show it note that if $[p,w]$ is the edge of $\Gamma_d$ ending at $p$, then for any $z \in [p,p']$, the distance of $z$ from a point of $[p,w]$ does not decrease as the point moves from $p$ to $w$, and $||w-z||_{M_{d+1}}=1$. Furthermore, for any other point $u \in \Gamma_d$, we have $||z-u||_{M_{d+1}}=1$. A similar observation shows the statement form $\Gamma_d'$, finishing the proof of Proposition~\ref{prop:highdim}.
\end{proof}

In light of Theorem~\ref{thm:extremals}, it is a meaningful question to ask the following. 

\begin{problem}\label{prob:constant}
Find all $M \in \mathcal{M}$ such that the value of $L_M(p,q)$ is the same value for all $p,q \in \mathbb{M}$ with $||p-q||_M=1$. In particular, is it true that $M$ has this property if and only if an affine copy of $M$ has $6$-fold rotational symmetry?
\end{problem}

Regarding Problem~\ref{prob:constant}, we remark that the consideration in Subsection~\ref{subsec:proof2} shows that if $M$ has $6$-fold rotational symmetry, then the value of $L_M(p,q)$ is independent of the choice of the points $p,q$ (provided that they are unit distance apart).

\bigskip

\noindent
\textbf{Acknowledgements}

The authors express their gratitude to Adrian Dumitrescu for directing their attention to this interesting topic, and for the fruitful discussions on it.

\end{document}